\numberwithin{equation}{section}
\newtheorem{thm}{Theorem}[section]
\newtheorem{lem}{Lemma}[section]
\newtheorem{rem}{Remark}[section]
\newtheorem{example}{Example}[section]
\newcommand{\ed}{\end {document}}
\begin{document}

\title{On a sinc-type MBE model}

\author[X.Y. Cheng]{Xinyu Cheng}
\address{X.Y. Cheng, Department of Mathematics, University of British Columbia, Vancouver, BC V6T 1Z2, Canada}
\email{xycheng@math.ubc.ca}

\author[D. Li]{ Dong Li}
\address{D. Li, Department of Mathematics, the Hong Kong University of Science \& Technology, Clear Water Bay, Kowloon, Hong Kong}
\email{mpdongli@gmail.com}

\author[C.Y. Quan]{Chaoyu Quan}	
\address{C.Y. Quan, SUSTech International Center for Mathematics, Southern University of Science and Technology,
	Shenzhen, P.R. China}
\email{quancy@sustech.edu.cn}

\author[W. Yang]{Wen Yang}
\address{\noindent W. Yang,~Wuhan Institute of Physics and Mathematics, Chinese Academy of Sciences, P.O. Box 71010, Wuhan 430071, P. R. China; Innovation Academy for Precision Measurement Science and Technology, Chinese Academy of Sciences, Wuhan 430071, P. R. China.}
\email{wyang@wipm.ac.cn}

\begin{abstract}
We introduce a new sinc-type molecular beam epitaxy model which is derived from a cosine-type
energy functional. The landscape of the new functional is remarkably similar to the classical MBE model
with double well potential but has the additional advantage that all its derivatives are uniformly
bounded. We consider first order IMEX and second order BDF2 discretization schemes. For both cases
we quantify  explicit time step constraints for the energy dissipation which is in good accord with the practical
numerical simulations. Furthermore we introduce a new theoretical framework and prove unconditional
uniform energy boundedness with no size restrictions on the time step. This is the first unconditional (i.e. independent of the time step size)
result for semi-implicit methods applied to the  phase field models without introducing any artificial stabilization terms or fictitious variables.
\end{abstract}

\maketitle

\section{Introduction}
In this work we consider the following molecular beam epitaxy (MBE) model posed
on the two dimensional periodic torus $\Omega = \mathbb T^2=[-\pi, \pi]^2$:
%\begin{align}
%\partial_t h = -\eta^2 \Delta^2 h   -\nabla \cdot ( \frac {\sin( |\nabla h|)}
%{|\nabla h|} \nabla h).
%\end{align}
%We can also rewrite it as
\begin{align} \label{1.1}
\partial_t h
= -\eta^2 \Delta^2 h   -\nabla \cdot (
\mathrm{sinc}(  |\nabla h|) \nabla h), \qquad (t,x) \in (0,\infty) \times \Omega,
\end{align}
where $\mathrm{sinc}(t) = \sin t /t $ is the usual $\mathrm{sinc}$-function and
\begin{align}
|\nabla h | = \sqrt{ (\partial_{x_1} h)^2 + (\partial_{x_2} h)^2}.
\end{align}
 The function $h=h(t,x)$ represents a scaled height function of thin film  and $\eta^2$ is a positive
constant.  Note that since the power series of $\mathrm{sinc}(t)$ consists only even powers of $t$, the term
$\mathrm{sinc}(|\nabla h|)$ is a bounded smooth function in $|\nabla h|^2=\nabla h\cdot \nabla h$.
Furthermore
\begin{align}
\sup_{z\in \mathbb R^2} |\partial_{z_1}^{k_1} \partial_{z_2}^{k_2} (\mathrm{sinc}(|z|) z) |
<\infty, \qquad\forall\, k_1, k_2\ge 0.
\end{align}
 Thanks to this simple observation it is a breeze
to build a wellposedness theory for \eqref{1.1}.
The equation \eqref{1.1} can be regarded as a $L^2$ gradient flow  of the energy functional
\begin{align} \label{1.3}
\mathcal E (h)= \int_{\Omega}
\Bigl( \frac 12 \eta^2 | \Delta h |^2 +  \cos ( |\nabla h | )  \Bigr) dx.
\end{align}
Consequently for smooth solutions, we have the energy identity
\begin{align}
\mathcal E( h(t_2) ) + \int_{t_1}^{t_2} \| \partial_t h \|_2^2 dt = \mathcal
E (h(t_1) ), \qquad\forall\, 0\le t_1 <t_2 <\infty.
\end{align}
In particular we have
\begin{align}
\mathcal E (h(t) ) \le \mathcal E (h(0) ), \qquad\forall\, t>0.
\end{align}
By using this fundamental monotonicity law in conjunction with the smoothness
and boundedness of the nonlinearity, one can obtain global wellposedness and regularity
for \eqref{1.1} with $H^2$ (or rougher) initial data. Note that for $|z|\ll 1$, we
have
\begin{align}
\cos z =\frac 1{24} (z^2-6)^2 -\frac 12 + O(|z|^6).
\end{align}
Thus up to a harmless additive constant, the energy functional \eqref{1.3} is
 similar to the standard double well-potential (see Section 2 \eqref{2.8}--\eqref{2.9}), namely
\begin{align}  \label{E_st}
\mathcal E_{\mathrm{st} }(h) &= \int_{\Omega}
\Bigl( \frac 12 \eta^2 |\Delta h|^2 + \frac 1{24}(|\nabla h|^2-6)^2 \Bigr) dx.
\end{align}
%& = \int_{\Omega}
%\Bigl( \frac 12 \nu |\Delta \tilde h|^2+
%\frac 14 ( |\nabla \tilde h|^2-1)^2 \Bigr) dx,
%\end{align}
%where $\tilde h=h/\sqrt 6$ and $\nu =6 \eta^2$.  Alternatively one can start with the following
%functional (see Section 2 for a more general derivation)
%\begin{align}
%\mathcal E &= \int_{\Omega} \Bigl( \frac 12 \eta^2 |\Delta h|^2 +  \frac 16 \cos(\sqrt 6 |\nabla h| )\Bigr) dx.
%\end{align}
%For $|\nabla h|\ll 1$, we obtain the corresponding standard double-well potential as:
%\begin{align}
%\mathcal E_{\mathrm{st}} (h)= \int_{\Omega} \Bigl( \frac 12 \eta^2 |\Delta h|^2 +  \frac 14 (|\nabla h|^2-1)^2
% \Bigr) dx . \label{E_st}
%\end{align}
In the MBE literature, it is well-known that the energy functional
\eqref{E_st}  leads to the thin film model with slope selection (cf.
\cite{ll2003} and \cite{xt2006}).
However a fundamental difficulty with the analysis and simulation of the dynamical
equation corresponding to
\eqref{E_st} is the lack of good  Lipschitz bounds on the nonlinearity (see \cite{lwy2020,lqt2016}).
In stark contrast our model \eqref{1.1} and its cousins has a ``built-in" control of the Lipschitz
norms which renders the analysis and simulation much more appealing. Moreover
 these new models satisfy Onsager's reciprocity relations and capture much the same physics
 as prototypical MBE models. To discuss these connections and put things into perspective we review below some prior arts
 and related analysis.

Molecular beam epitaxy (MBE) is a process for growing thin, epitaxial films of a wide variety of materials, ranging from oxides to semiconductors and metals. It is first used in  the epitaxial growth of compound semiconductor films by a process involving the reaction of one or more thermal molecular beams with a crystalline surface under ultra-high vaccum conditions. These arriving constituent atoms form a crystalline layer in registry with the substrate, i.e., an epitaxial film. To understand the  mechanism of this physical process, several different PDE models have been proposed  for investigating the morphological instability and interfacial dynamics at various temporal and spatial scales \cite{k1997,v1991}. In a fixed Cartesian coordinate system, one can denote $h_{\mathrm{real}}$
as the actual height of the film at time $t$, and traces the evolution of $h(t,x)= h_{\mathrm{real}}
-F t$ where $F$ stands for the deposition flux.  In yet other words the film surface at time $t$
is represented by $x_3= h(t,x_1,x_2)$ in a co-moving frame. By using conservation of mass, we have
\begin{equation} \label{cons_1}
h_t=-\Omega_a\nabla\cdot J,
\end{equation}
where $\Omega_a$ is the atomic volume and $J$ is the surface current. On a purely phenomenological basis and considering only isotropic growth, we may represent the surface current as a series (cf.
  %\cite{ors1999, d1996,k1997,ld1991}
  \cite{ors1999,z1995,z1996}
  )
\begin{equation}
\label{1.sc}
J=A_1\nabla h+A_2\nabla(\Delta h)+A_3|\nabla h|^2\nabla h+A_4\nabla|\nabla h|^2+\cdots,
\end{equation}
In \eqref{1.sc} one typically only retains terms vanishing no faster than
$L^{-4}$. Plugging this into \eqref{cons_1}, we arrive at
\begin{align}
\partial_t h=~&-\Omega_a \Bigl( A_1\Delta h+A_2\Delta^2h+A_3\nabla\cdot(|\nabla h|^2\nabla h)+A_4\Delta (|\nabla h|^2) \Bigr).
\end{align}
The physical meaning of each term is as follows:
\begin{itemize}
\item $A_1 \Delta h$: diffusion by particle exchange between surface and
the vapor (Mullins \cite{m1957});
\item  $A_2 \Delta^2 h$: capillarity-driven surface diffusion (Mullins \cite{m1957},
Herring \cite{h1951});
\item $A_3 \nabla \cdot (|\nabla h|^2 \nabla h)$: consistent with an atomistic
model by Das Sarma-Ghaisas \cite{dg1992};
\item $A_4 \Delta (|\nabla h|^2)$: Lai-Das Sarma \cite{ld1991} used it to describe the model where an atom moves to a neighboring kink site and breaks the bond to find another kink with smaller step height.
\end{itemize}
As it was well explained in \cite{ld1991}, each of these terms can be viewed as a building block
to form composite structures such as peaks, valeys and so on (see figure \ref{fig:ders} for the
1D case).
In \cite{ors1999}, the term $A_4 \Delta (|\nabla h|^2)$ was dropped in view of
Onsager's reciprocity relations (\cite{onsager1931,onsager1931-2,pb1996}).  By a relabelling
of the constants,  this leads
to the standard model
\begin{align}
\partial_t h = - D \Delta^2 h +v \nabla \cdot ( (k^{-2} |\nabla h|^2 -1) \nabla h).
\end{align}

\begin{figure}[!h]
\centering
\includegraphics[width=0.99\textwidth,clip,trim={2in 0.8in 1.5in 0.5in}]{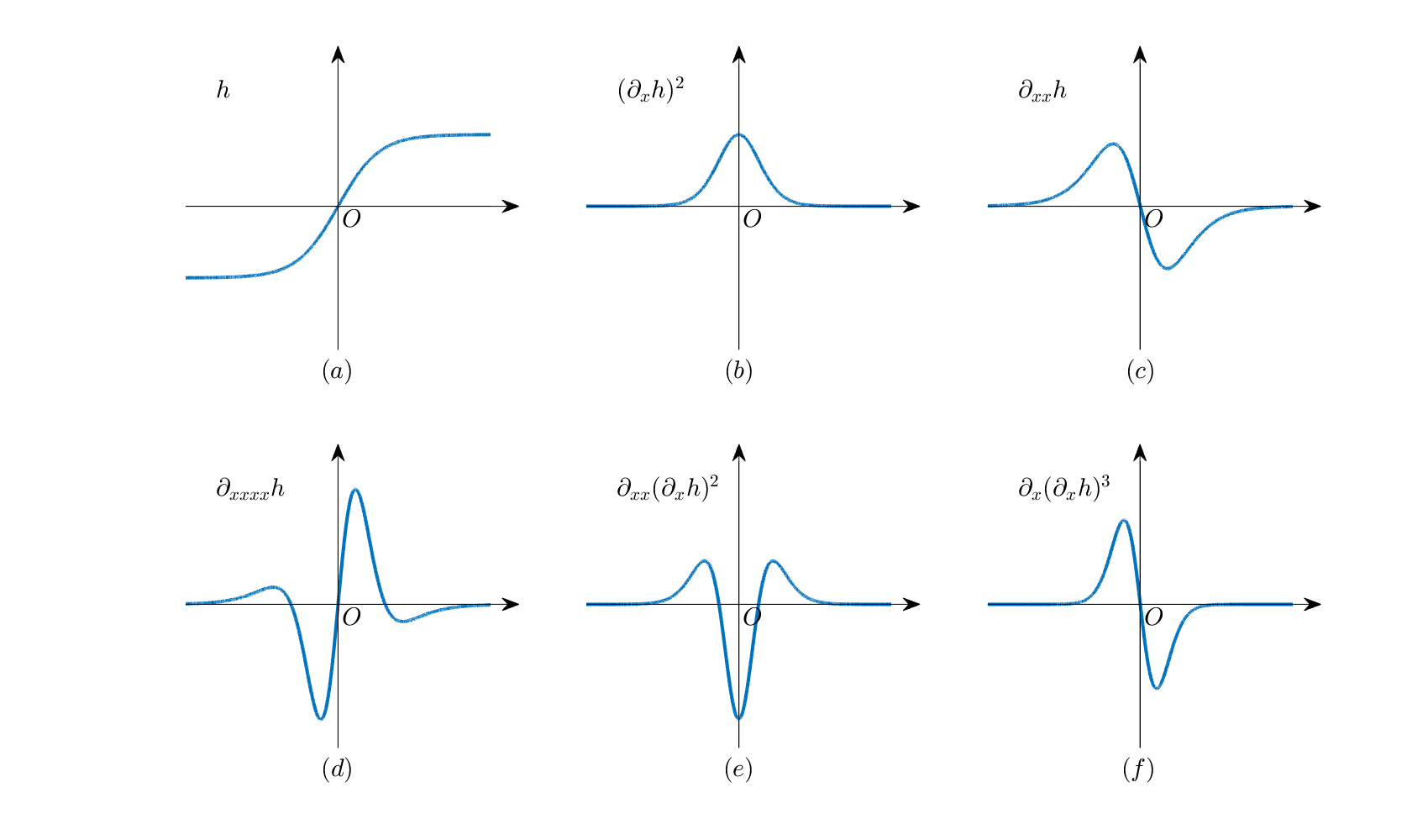}
\caption{\small Schematic diagram of $h = \tanh(x)$ and its derivatives.  }\label{fig:ders}
\end{figure}

In Section 2, we shall show that our model \eqref{1.1} corresponds to the expansion
\eqref{1.sc} by carefully incorporating higher order terms which satisfy Onsager's reciprocity
relations. In the regime $|\nabla h|\ll 1$ our model coincides with the standard MBE model
with slope selection. On the other hand, it is \emph{vastly different} from the MBE model
without slope selection (cf. \cite{ll2003})
\begin{align} \label{1.13}
\partial_t h = - \eta^2 \Delta^2 h - \nabla \cdot ( \frac {\nabla h} {1+ |\nabla h|^2} ),
\end{align}
since its energy functional $\mathcal E (h) = \int_{\Omega}
(\frac {\eta^2}2 |\Delta h|^2  - \frac 12 \ln (|1+ |\nabla h|^2) )dx$ behaves badly and leads
to mound-like structures.  Note that a nice feature of \eqref{1.13} is that its nonlinearity has
bounded derivatives of all orders. However its energy landscape is different from the standard MBE
model with slope selection. Our model can be regarded as a variant of the standard MBE model with slope
selection and with \emph{tamed} nonlinearity (in terms of Lipschitz control).  In future works we plan to carry out an in-depth comparative study of our new model with existing standard models.

The main contribution of this work is as follows.
\begin{enumerate}
\item  We introduce a new sinc-type molecular beam epitaxy model which is derived from a cosine-type energy functional.   The landscape of the new functional is remarkably similar to the classical MBE model with double well potential but has the additional advantage that all its derivatives are uniformly
bounded. Furthermore it captures the same physics as the classical MBE model with slope selection.
It is possible to generalize our approach to a more systematic construction of ``benign" free energies whilst keeping the same physics.

\item We consider first order IMEX and second order BDF2 discretization schemes. For both cases
we quantify  explicit time step constraints for the energy dissipation which is in good accord with the practical
numerical simulations.

\item We introduce a new theoretical framework and prove unconditional
uniform energy boundedness \emph{with no size restrictions on the time step}. This is the first unconditional (i.e. independent of the time step size)
result for semi-implicit methods applied to the  phase field models without introducing any artificial stabilization terms or fictitious variables.

\end{enumerate}

The rest of this paper is organized as follows. In Section 2 we give a more detailed derivation
of the model and explain its connection with earlier models. In Section 3 and 4 we consider
first order IMEX discretization and prove the energy stability bounds. In Section 5 and 6 we carry
out the analysis for BDF2 methods. Section 7 collects several numerical simulations with side
by side comparisons with existing MBE models. In the final section we give concluding remarks.

%\textbf{connection with earlier work.}
%The same issue is also inherently present in the Cahn-Hilliard equation (cf. \cite{CM95} where Caffarelli
%and Muler circumvented this problem by mollifying and truncating the original Cahn-Hilliard
%nonlinearity.)

\section{Derivation of the model}
To allow some generality, we start with the energy functional
\begin{align}
\mathcal E = \int_{\Omega} \Bigl( \frac 12 \eta^2 |\Delta h|^2 +  \beta_1 \cos(\beta |\nabla h| )\Bigr) dx,
\end{align}
where $\beta_1$, $\beta$ are constants.

Denote
\begin{align}
g_1(t) = \sum_{k=0}^{\infty} \frac {(-1)^k t^k} {(2k)!} = 1 + \sum_{k=1}^{\infty} \frac {(-1)^k t^k }{(2k)!}.
\end{align}
Note that
\begin{align}
&g_1^{\prime}(t) = \frac 12 \sum_{k=1}^{\infty} \frac {(-1)^{k}} {(2k-1)!} t^{k-1}, \notag \\
& g_1^{\prime}(s^2) = -\frac 12 s^{-1} \sin s. \label{2.4Tp}
\end{align}
Here the identity \eqref{2.4Tp} also holds at $s=0$ provided we identify $\frac {\sin s }s \Bigr|_{s=0}=1$.

Clearly
\begin{align}
& \mathcal F = \int_{\Omega} \cos(\beta |\nabla h|)  dx = \int_{\Omega} g_1(\beta^2 |\nabla h|^2) dx, \\
&\frac {\delta \mathcal F} {\delta h} \Bigr|_{L^2}= -2\beta^2\nabla \cdot (g_1^{\prime}( \beta^2|\nabla h|^2) \nabla h),
\end{align}
where $\frac {\delta \mathcal F } {\delta h} \Bigr|_{L^2}$ denotes the variational derivative
of the functional $\mathcal F$ in $L^2$.  By \eqref{2.4Tp}, it follows that
\begin{align}
\frac {\delta \mathcal F} {\delta h} \Bigr|_{L^2}= \beta^2\nabla \cdot ( \frac {\sin(\beta |\nabla h|)}
{\beta|\nabla h|} \nabla h).
\end{align}

Thus
\begin{align}
\frac {\delta \mathcal E}{\delta h}
\Bigr|_{L^2}= \eta^2 \Delta^2 h  +\beta^2 \beta_1\nabla \cdot ( \mathrm{sinc} ( \beta |\nabla h|)
 \nabla h).
\end{align}
Note that for $|\nabla h| \ll 1$, we have
\begin{align} \label{2.8}
\mathrm{sinc} ( \beta |\nabla h|) = 1- \frac 16\beta^2 |\nabla h|^2 + O(|\nabla h|^4).
\end{align}
Taking $\beta_1=1/6$ and $\beta=\sqrt 6$, we then recover the usual MBE model with slope selection:
\begin{align} \label{2.9}
\partial_t h = -\frac {\delta \mathcal E}{\delta h}
\Bigr|_{L^2}= -\eta^2 \Delta^2 h  -\nabla \cdot ( (1- |\nabla h|^2) \nabla h).
\end{align}

We now explain the connection with the general model mentioned in the introduction especially
concerning the expansion \eqref{1.sc}.
\subsection{Connection with the general model.}
Firstly, we start with the expansion
\begin{equation}
\label{1.sc-f}
\begin{aligned}
J=~&A_1\nabla h+A_2\nabla(\Delta h)+A_3|\nabla h|^2\nabla h+A_4\nabla|\nabla h|^2+\sum_{i=1}^\infty A_{4+i}|\nabla h|^{2+2i}\nabla h.
\end{aligned}
\end{equation}
Substituting \eqref{1.sc-f} into \eqref{1.sc} we get
\begin{equation}
\label{1.h-f}
\begin{aligned}
h_t=~&-\Omega_a\Big[A_1\Delta h+A_2\Delta^2h+A_3\nabla\cdot(|\nabla h|^2\nabla h)+A_4\Delta|\nabla h|^2\\ &+\sum_{i=1}^\infty A_{4+i}\nabla\cdot(|\nabla h|^{2+2i}\nabla h|)\Big].
\end{aligned}
\end{equation}
Enforcing Onsager's reciprocity relations (see \cite{onsager1931,onsager1931-2,pb1996}), we can drop the  $A_4$ term. Thus
\begin{equation}
\label{1.h-e}
\partial_th=-c\nabla\cdot(\nabla h(1-\sum_{i=1}^\infty B_i|\nabla h|^{2i}))-D \Delta^2h,
\end{equation}
where $c=\Omega_aA_1$, $B_1=-\frac{A_3}{A_1}$, $B_i=-\frac{A_{3+i}}{A_1}$ for $i\geq2$ and $D=\Omega_aA_2$. When the coefficients $B_i$ satisfy
\begin{equation}
B_i=\frac{(-1)^{i-1}}{(2i+1)!}\quad\mbox{for}\quad i\geq1,
\end{equation}
we obtain
\begin{equation}
\label{1.smbe}
\partial_t h=-D\Delta^2h-c\nabla\cdot\left(\frac{\sin(|\nabla h|)}{|\nabla h|}\nabla h\right).
\end{equation}

\section{Energy decay of first order IMEX}
We consider the following first order implicit-explicit (IMEX) scheme:
\begin{align} \label{s3.1}
\frac {h^{n+1}-h^n}{\tau}
= -\eta^2 \Delta^2 h^{n+1} + \nabla \cdot ( g(\nabla h^n) ),
\end{align}
where
\begin{align}
g(z) =- \frac {\sin (|z|) }{|z|} z, \quad z\in \mathbb R^2.
\end{align}

\begin{lem} \label{Ls3.2}
Let $G(z) = \cos |z| =\cos \sqrt{z_1^2+z_2^2}$ for $z \in \mathbb R^2$. Then
\begin{align}
 \sum_{i,j=1}^2  x_i x_j \partial_{z_i} \partial_{z_j} G (z)
\le  |x|^2, \qquad \forall\, x, z\in\mathbb R^2.
\end{align}
\end{lem}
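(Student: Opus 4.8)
The plan is to recognize the left-hand side as the quadratic form of the Hessian $\nabla^2 G$ and to show that its largest eigenvalue is bounded above by $1$ at every point $z$. Since $G(z)=f(r)$ with $r=|z|$ and $f=\cos$ is radial, the first step is to record the standard Hessian formula for radial functions, valid for $r>0$:
\[
\partial_{z_i}\partial_{z_j} G
= f''(r)\,\frac{z_i z_j}{r^2}
+ \frac{f'(r)}{r}\Bigl(\delta_{ij}-\frac{z_i z_j}{r^2}\Bigr),
\qquad f'=-\sin,\quad f''=-\cos .
\]
The geometric content is that $\nabla^2 G$ has eigenvalue $f''(r)=-\cos r$ in the radial direction $z/|z|$ and eigenvalue $f'(r)/r=-\sin r / r$ in the orthogonal direction. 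I would treat the origin $r=0$ separately, using the power series $G(z)=1-\tfrac12|z|^2+O(|z|^4)$ (already noted in the excerpt), which gives $\nabla^2 G(0)=-I$ and is consistent with the continuous limits $-\cos r\to -1$ and $-\sin r/r\to -1$ as $r\to 0$.

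Next I would diagonalize the form directly. Decomposing the test vector as $x=a\,\tfrac{z}{r}+x_\perp$ with $a=(x\cdot z)/r$, $x_\perp\cdot z=0$, and $|x_\perp|^2=|x|^2-a^2$, the cross terms drop out and the form collapses to
\[
\sum_{i,j=1}^2 x_i x_j\,\partial_{z_i}\partial_{z_j} G(z)
= -\cos r\, a^2 \;-\; \frac{\sin r}{r}\,|x_\perp|^2 .
\]
The claimed bound $\sum_{i,j} x_i x_j\,\partial_{z_i}\partial_{z_j} G \le |x|^2 = a^2+|x_\perp|^2$ is then equivalent to the manifestly structured inequality
\[
(1+\cos r)\,a^2 + \Bigl(1+\frac{\sin r}{r}\Bigr)|x_\perp|^2 \ge 0 .
\]

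The final step is to check that both coefficients are nonnegative: $1+\cos r\ge 0$ because $\cos r\ge -1$, and $1+\tfrac{\sin r}{r}\ge 0$ because $|\sin r|\le |r|$ forces $|\sin r/r|\le 1$. Both terms are thus nonnegative and the inequality follows. I do not anticipate a genuine analytic obstacle here; the only delicate point is purely bookkeeping at $r=0$, where the radial Hessian formula is singular even though $G$ is smooth, which is why I would dispatch that case by the power-series computation or by continuity of the two eigenvalues. It is worth emphasizing that the bound is exactly the statement that $\tfrac12|z|^2-G(z)$ is convex, i.e.\ $I-\nabla^2 G\ge 0$; this convexity at threshold $\lambda=1$ is presumably the structural fact that will drive the energy-stability estimates for the IMEX scheme \eqref{s3.1} in the sequel.
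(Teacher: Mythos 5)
Your proof is correct and follows essentially the same route as the paper: compute the Hessian of the radial function $G$, split $x$ into components along and orthogonal to $z$ so that the quadratic form becomes $-\cos r\,a^2 - \frac{\sin r}{r}|x_\perp|^2$, and bound both eigenvalues $-\cos r$ and $-\sin r/r$ by $1$. Your explicit treatment of the origin and the closing remark on the convexity of $\tfrac12|z|^2 - G(z)$ are nice additions, but the core argument is identical to the paper's.
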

\begin{proof}
With no loss we assume $z\ne 0$.
By a simple computation we have
\begin{align} \label{e3.5_0}
\partial_{z_i} \partial_{z_j} G
=-\left(\cos |z|- \frac{\sin |z|} {|z|}\right)
\frac{z_i z_j}{|z|^2} -\frac {\sin |z|} {|z|} \delta_{ij}.
\end{align}
Thus
\begin{align}
 \sum_{i,j=1}^2  x_i x_j \partial_{z_i} \partial_{z_j} G (z)
&= \left(-\cos |z|+ \frac{\sin |z|} {|z|}\right)
\frac{|x\cdot z|^2}{|z|^2} -\frac {\sin |z|} {|z|} |x|^2 \notag \\
&= (-\cos |z|) \frac{ |x\cdot z|^2} {|z|^2}
-\frac{\sin |z|} {|z|} |x\cdot z^{\perp}|^2,
\end{align}
where $z^{\perp}$ is a unit vector orthogonal to $z$.
The desired result then easily follows.
\end{proof}

For simplicity of notation,  we denote
\begin{align}
E_n = \int_{\Omega}\Bigl(
\frac {\eta^2}2 |\Delta h^n|^2 + \cos |\nabla h^n | \Bigr)dx.
\end{align}

\begin{thm}[Energy dissipation]\label{thm3.1}
Consider the IMEX scheme \eqref{s3.1} with $h^0 \in H^2(\mathbb T^2)$.
For any $n\ge 0$,
\begin{align}
 &
 E_{n+1}-E_n  +( \sqrt{\frac{2\eta^2}{\tau}} -\frac {1}2) \|\nabla(h^{n+1}-h^n)\|_2^2
 \le 0,
 \end{align}
 In particular if
 \begin{align}
0< \tau \le {8\eta^2},
 \end{align}
 then $E_{n+1}\le E_n$ for all $n\ge 0$.
\end{thm}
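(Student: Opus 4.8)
The plan is to test the scheme \eqref{s3.1} against the increment $w:=h^{n+1}-h^n$ and bound the energy difference $E_{n+1}-E_n$ from above, using the one-sided Hessian estimate of Lemma~\ref{Ls3.2} to control the explicitly treated nonlinearity. First I would split $E_{n+1}-E_n=I+II$, where $I=\frac{\eta^2}{2}\int_\Omega(|\Delta h^{n+1}|^2-|\Delta h^n|^2)\,dx$ is the biharmonic part and $II=\int_\Omega(\cos|\nabla h^{n+1}|-\cos|\nabla h^n|)\,dx$ is the potential part. For the quadratic part $I$, I would use the exact algebraic identity $\tfrac12(a^2-b^2)=a(a-b)-\tfrac12(a-b)^2$ with $a=\Delta h^{n+1}$ and $b=\Delta h^n$ to write
\[
I=\eta^2\int_\Omega \Delta h^{n+1}\,\Delta w\,dx-\frac{\eta^2}{2}\|\Delta w\|_2^2.
\]

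For the potential part $II$, writing $G(z)=\cos|z|$ and noting that $g(z)=\nabla_z G(z)$, I would apply Taylor's theorem with Lagrange remainder pointwise in $x$:
\[
G(\nabla h^{n+1})-G(\nabla h^n)=g(\nabla h^n)\cdot\nabla w+\frac12\sum_{i,j=1}^2 \partial_{z_i}\partial_{z_j}G(\xi)\,\partial_{x_i}w\,\partial_{x_j}w,
\]
for an intermediate point $\xi$. The crucial input is Lemma~\ref{Ls3.2}, which bounds the Hessian quadratic form above by $|\nabla w|^2$, so that $II\le\int_\Omega g(\nabla h^n)\cdot\nabla w\,dx+\tfrac12\|\nabla w\|_2^2$. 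An integration by parts on the torus (no boundary terms) then gives $\int_\Omega g(\nabla h^n)\cdot\nabla w\,dx=-\int_\Omega(\nabla\cdot g(\nabla h^n))\,w\,dx$.

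The combining step uses the scheme itself. Testing \eqref{s3.1} against $w$ and integrating by parts yields $\eta^2\int_\Omega\Delta h^{n+1}\Delta w\,dx=\int_\Omega(\nabla\cdot g(\nabla h^n))\,w\,dx-\tfrac1\tau\|w\|_2^2$. Substituting this into $I$ and adding $II$, the two $\nabla\cdot g$ contributions cancel exactly, leaving
\[
E_{n+1}-E_n\le -\frac1\tau\|w\|_2^2-\frac{\eta^2}{2}\|\Delta w\|_2^2+\frac12\|\nabla w\|_2^2.
\]
Finally I would absorb the middle-order term via the interpolation $\|\nabla w\|_2^2=-\int_\Omega w\,\Delta w\,dx\le\|w\|_2\|\Delta w\|_2$ together with the sharp Young/AM--GM inequality $\tfrac1\tau\|w\|_2^2+\tfrac{\eta^2}{2}\|\Delta w\|_2^2\ge\sqrt{2\eta^2/\tau}\,\|w\|_2\|\Delta w\|_2\ge\sqrt{2\eta^2/\tau}\,\|\nabla w\|_2^2$. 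This rearranges precisely to the claimed inequality $E_{n+1}-E_n+(\sqrt{2\eta^2/\tau}-\tfrac12)\|\nabla w\|_2^2\le0$, and the dissipation $E_{n+1}\le E_n$ follows since $\sqrt{2\eta^2/\tau}\ge\tfrac12$ is equivalent to $\tau\le8\eta^2$.

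I expect the decisive step to be the potential part: the one-sided Hessian bound of Lemma~\ref{Ls3.2} is exactly what allows the explicit nonlinear term to cost only $\tfrac12\|\nabla w\|_2^2$ rather than an uncontrolled higher-order quantity, and the matching of the constant $\sqrt{2\eta^2/\tau}$ hinges on using the AM--GM balance with the equality-achieving weights $\tfrac1\tau$ and $\tfrac{\eta^2}{2}$ against the interpolation inequality. The remaining manipulations (integrations by parts, the quadratic identity) are routine on the periodic torus.
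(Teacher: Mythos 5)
Your proof is correct and follows essentially the same route as the paper: testing the scheme against $h^{n+1}-h^n$, the quadratic identity for the biharmonic term, Taylor expansion of $G(z)=\cos|z|$ controlled by Lemma~\ref{Ls3.2} (the paper uses the integral-form remainder rather than the Lagrange form, an immaterial difference here), and the interpolation $\|\nabla w\|_2^2\le\|w\|_2\|\Delta w\|_2$ combined with AM--GM to produce the constant $\sqrt{2\eta^2/\tau}$. No gaps; the argument matches the paper's proof step for step.
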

\begin{proof}
Taking the $L^2$-inner product with $(h^{n+1}-h^n)$ on both sides of \eqref{s3.1}, we get
\begin{align*}
 \frac 1 {\tau} \| h^{n+1}-h^n \|_2^2 + \frac {\eta^2}2 ( \| \Delta h^{n+1}\|_2^2 -\| \Delta h^n \|_2^2
   + \| \Delta(h^{n+1}-h^n) \|_2^2 )  
   = - (g(\nabla h^n), \nabla (h^{n+1}-h^n) ).
\end{align*}

Denote $G(z)=\cos |z| = \cos \sqrt{z_1^2+z_2^2}$ for $z\in \mathbb R^2$. Introduce
\begin{align*}
H(s)= G(\nabla h^n + s(\nabla h^{n+1} -\nabla h^n) ).
\end{align*}

By using the expansion
\begin{align*}
H(1) = H(0) + H^{\prime}(0) + \int_0^1 H^{\prime\prime}(s) (1-s) ds,
\end{align*}
we get
\begin{align*}
G(\nabla h^{n+1})&-G(\nabla h^n) = g(\nabla h^n)\cdot (\nabla h^{n+1}-\nabla h^n) \notag \\
&\; + \sum_{i,j=1}^2 \partial_i(h^{n+1}-h^n) \partial_j (h^{n+1}-h^n)
\int_0^1 (\partial_{ij} G)(\nabla h^n +s(\nabla h^{n+1}-\nabla h^n) ) (1-s) ds.
\end{align*}
By Lemma \ref{Ls3.2}, we obtain
\begin{align}
E_{n+1}-E_n+ \frac 1 {\tau} \| h^{n+1}-h^n \|_2^2 +\frac {\eta^2}2\| \Delta(h^{n+1}-h^n) \|_2^2
   \le \frac 12 \| \nabla (h^{n+1}-h^n) \|_2^2.
\end{align}

The desired inequality now follows from this and the simple interpolation inequality
\begin{align}
\|\nabla h\|_2 \le \| h\|_2^{\frac 12} \| \Delta h\|_2^{\frac 12}.
\end{align}
\end{proof}

\section{Uniform boundedness of the energy for any $\tau>0$}

\begin{thm}[Unconditional uniform energy boundedness] \label{thm4.1}
Consider the scheme \eqref{s3.1} with $\tau>0$ and $h^0 \in H^2(\mathbb T^2)$.   We have
\begin{align}
\sup_{n\ge 1}(\| h^n\|_2+ \| \Delta h^n\|_2) \le C_1<\infty,
\end{align}
where $C_1>0$ depends only on $(h^0, \eta)$. Note that $C_1$ is \underline{independent of $\tau$}.
\end{thm}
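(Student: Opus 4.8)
The plan is to exploit the one structural fact that makes this model so pleasant: the explicit flux is pointwise bounded, $|g(z)| = |\sin|z|| \le 1$, so that $\|g(\nabla h^n)\|_2 \le |\Omega|^{1/2} = 2\pi$ uniformly in $n$ and $\tau$; in other words the forcing $\nabla\cdot g(\nabla h^n)$ is bounded in $H^{-1}$ by an absolute constant. Combined with conservation of the mean (integrating \eqref{s3.1} over $\mathbb T^2$ annihilates both the biharmonic and the divergence terms, so $\int_\Omega h^n = \int_\Omega h^0$ for all $n$), this should reduce the whole problem to a scalar recursion. I would first establish a uniform $L^2$ bound on the mean-zero part $\tilde h^n := h^n - |\Omega|^{-1}\int_\Omega h^0$, and only then bootstrap to the $H^2$ bound.

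For the $L^2$ step I take the $L^2$ inner product of \eqref{s3.1} with $\tilde h^{n+1}$. Since $h^{n+1}-h^n = \tilde h^{n+1}-\tilde h^n$, $\Delta^2 h^{n+1} = \Delta^2\tilde h^{n+1}$ and $\nabla h^{n+1} = \nabla\tilde h^{n+1}$, the identity for $(\tilde h^{n+1}-\tilde h^n,\tilde h^{n+1})$, one integration by parts, and the bound $(g(\nabla h^n),\nabla\tilde h^{n+1}) \le 2\pi\|\nabla\tilde h^{n+1}\|_2$ yield
\begin{align}
\tfrac12\|\tilde h^{n+1}\|_2^2 + \tau\eta^2\|\Delta\tilde h^{n+1}\|_2^2 \le \tfrac12\|\tilde h^n\|_2^2 + 2\pi\tau\|\nabla\tilde h^{n+1}\|_2.
\end{align}
Because $\tilde h^{n+1}$ has zero mean on $\mathbb T^2$ the lowest frequency is $1$, so $\|\tilde h^{n+1}\|_2 \le \|\nabla\tilde h^{n+1}\|_2 \le \|\Delta\tilde h^{n+1}\|_2$. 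Young's inequality $2\pi\tau\|\nabla\tilde h^{n+1}\|_2 \le \frac{\tau\eta^2}{2}\|\Delta\tilde h^{n+1}\|_2^2 + \frac{2\pi^2\tau}{\eta^2}$ then gives the intermediate bound $\frac12\|\tilde h^{n+1}\|_2^2 + \frac{\tau\eta^2}{2}\|\Delta\tilde h^{n+1}\|_2^2 \le \frac12\|\tilde h^n\|_2^2 + \frac{2\pi^2\tau}{\eta^2}$, and lower-bounding the leftover dissipation by $\|\Delta\tilde h^{n+1}\|_2^2 \ge \|\tilde h^{n+1}\|_2^2$ produces the recursion
\begin{align}
(1+\tau\eta^2)\|\tilde h^{n+1}\|_2^2 \le \|\tilde h^n\|_2^2 + \tfrac{4\pi^2\tau}{\eta^2}.
\end{align}
With $\theta = (1+\tau\eta^2)^{-1}$ the per-step forcing is $\theta\,\frac{4\pi^2\tau}{\eta^2}$, and since $1-\theta = \tau\eta^2\theta$ the geometric sum is $\frac{\theta\,4\pi^2\tau/\eta^2}{1-\theta} = \frac{4\pi^2}{\eta^4}$, which is strikingly independent of $\tau$; hence $\|\tilde h^n\|_2^2 \le \|\tilde h^0\|_2^2 + 4\pi^2/\eta^4 =: M^2$ for every $n$.

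For the $H^2$ bound I would split on the size of $\tau$. When $\tau > 8\eta^2$ I return to the intermediate inequality $\frac{\tau\eta^2}{2}\|\Delta\tilde h^{n+1}\|_2^2 \le \frac12\|\tilde h^n\|_2^2 + \frac{2\pi^2\tau}{\eta^2} \le \frac12 M^2 + \frac{2\pi^2\tau}{\eta^2}$, whence $\|\Delta\tilde h^{n+1}\|_2^2 \le \frac{M^2}{\tau\eta^2} + \frac{4\pi^2}{\eta^4} \le \frac{M^2}{8\eta^4} + \frac{4\pi^2}{\eta^4}$ — a $\tau$-independent bound, the lower threshold on $\tau$ being exactly what tames the $1/\tau$ factor. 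When $0 < \tau \le 8\eta^2$ the dissipativity already proven in Theorem~\ref{thm3.1} applies: $E_n \le E_0$, and since $\int_\Omega\cos|\nabla h^n| \ge -|\Omega|$ this gives $\frac{\eta^2}{2}\|\Delta h^n\|_2^2 \le E_0 + |\Omega|$. Because $\Delta h^n = \Delta\tilde h^n$ and $\|h^n\|_2 \le M + |\Omega|^{1/2}\,|\,|\Omega|^{-1}\!\int_\Omega h^0\,|$, patching the two $\tau$-regimes against the uniform $L^2$ bound delivers a constant $C_1$ depending only on $(h^0,\eta)$.

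The main obstacle, and the reason naive estimates fail, is that any direct single-step bound of $\|\Delta h^{n+1}\|_2$ in terms of $\|\Delta h^n\|_2$ (for instance via the Fourier multiplier $|k|^2/(1+\tau\eta^2|k|^4)$) carries a coefficient that is either equal to $1$ or degenerates as $\tau$ grows, so iterating it over $n$ steps destroys all uniformity. The device that rescues the argument is to descend first to $L^2$, where the implicit biharmonic term furnishes a genuine damping factor $\theta<1$ whose geometric sum against the $O(\tau)$ forcing cancels the $\tau$-dependence exactly; the $H^2$ bound is then recovered cheaply, either from the residual dissipation (large $\tau$) or from the already-established energy decay (small $\tau$). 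I expect verifying this precise cancellation in the geometric series, and the clean patching of the two $\tau$-regimes, to be the only genuinely delicate points.
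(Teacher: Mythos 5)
Your proof is correct, and it shares the paper's overall skeleton --- reduce to mean-zero data via conservation of the mean, test the scheme with $h^{n+1}$, use $\|g\|_\infty\le 1$ to bound the forcing term by $2\pi\|\nabla h^{n+1}\|_2\le 2\pi\|\Delta h^{n+1}\|_2$, and split at $\tau=8\eta^2$ so that the small-$\tau$ regime follows from the energy decay of Theorem \ref{thm3.1} --- but both key extraction steps are carried out by genuinely different mechanisms. For the uniform $L^2$ bound, the paper keeps the one-step inequality $\frac{1}{2\tau}\left(\|h^n\|_2^2-\|h^{n-1}\|_2^2\right)+\eta^2\|\Delta h^n\|_2^2\le 2\pi\|\Delta h^n\|_2$ as is and runs a case analysis on the set $S=\{n:\|\Delta h^n\|_2\le 2\pi/\eta^2\}$: off $S$ the $L^2$ norm is nonincreasing, on $S$ Poincar\'e gives $\|h^n\|_2\le 2\pi/\eta^2$, and tracking the last visit to $S$ yields $\sup_n\|h^n\|_2\le\max(\|h^0\|_2,\,2\pi/\eta^2)$. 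You instead use Poincar\'e to turn the dissipation into damping, $(1+\tau\eta^2)\|\tilde h^{n+1}\|_2^2\le\|\tilde h^n\|_2^2+4\pi^2\tau/\eta^2$, and sum the geometric series; your cancellation is exact, since $\theta/(1-\theta)=1/(\tau\eta^2)$ for $\theta=(1+\tau\eta^2)^{-1}$, giving the explicit $\tau$-free bound $\|\tilde h^0\|_2^2+4\pi^2/\eta^4$ with no case distinctions. For the $H^2$ bound in the regime $\tau>8\eta^2$, the paper rewrites the scheme as $\eta^2\Delta^2h^{n+1}=-(h^{n+1}-h^n)/\tau+\nabla\cdot(g(\nabla h^n))$ and applies $(-\Delta)^{-1}$, combining $\|g\|_\infty\le1$, the uniform $L^2$ bound and $\tau\gtrsim1$; you instead reuse the dissipation term $\frac{\tau\eta^2}{2}\|\Delta\tilde h^{n+1}\|_2^2$ retained in your own inequality and divide by $\tau\eta^2$, with the threshold $\tau>8\eta^2$ taming the resulting $1/\tau$ factor. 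Both routes rest on the same two structural facts (pointwise-bounded flux, implicit biharmonic dissipation); what yours buys is fully explicit constants and a single monotone recursion in place of the combinatorial argument and the elliptic inversion, and it is worth noting that your damping-plus-geometric-sum device is essentially the mechanism the paper itself deploys later for the BDF2 case (Theorem \ref{thm6.1a}, where a spectral contraction factor $\theta_0<1$ is iterated), so your first-order proof unifies naturally with the second-order one.
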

\begin{proof}
We begin by noting that
\begin{align}
\int_{\Omega} h^n dx =\int_{\Omega} h^0 dx, \qquad \forall\, n\ge 1.
\end{align}
Thus with no loss we may assume $\int_{\Omega} h^n dx =0$ for all $n\ge 0$. This will justify the use
of the Poincar\'e inequality in our proof below.

Taking the $L^2$-inner product with $h^{n+1}$ on both sides of \eqref{s3.1}, we obtain
\begin{align}
\frac {\|h^{n+1}\|_2^2 -\| h^n\|_2^2 +\| h^{n+1}-h^n\|_2^2}
{2\tau} + \eta^2 \| \Delta h^{n+1} \|_2^2 = -( g(\nabla h^n), \nabla h^{n+1} ),
\end{align}
where $(\cdot,\cdot)$ denotes the usual $L^2$-inner product.  Since $\|g\|_{\infty} \le 1$, we obtain
\begin{align}
 -( g(\nabla h^n), \nabla h^{n+1} ) \le  \|\nabla h^{n+1} \|_1 \le 2\pi \| \nabla h^{n+1} \|_2
 \le 2\pi \| \Delta h^{n+1} \|_2.
 \end{align}
Therefore we obtain for all $n\ge 0$,
\begin{align}
\frac {\|h^{n+1}\|_2^2 -\| h^n\|_2^2 }{2\tau} +\eta^2 \| \Delta h^{n+1} \|_2^2
\le 2\pi \| \Delta h^{n+1} \|_2.
\end{align}
For convenience of notation, we rewrite it as
\begin{align} \label{4.5tP0}
\frac {\|h^{n}\|_2^2 -\| h^{n-1} \|_2^2 }{2\tau} +\eta^2 \| \Delta h^{n} \|_2^2
\le 2\pi \| \Delta h^{n} \|_2, \qquad\forall\, n\ge 1.
\end{align}

By using \eqref{4.5tP0}, we can derive
\begin{align} \label{4.5tP1}
\sup_{n\ge 0} \| h^{n} \|_2 \le C_2,
\end{align}
where $C_2>0$ depends only on $(h^0, \eta)$.

To show \eqref{4.5tP1}, consider the set
\begin{align}
S= \{ n \ge 1: \,  \|\Delta h^n \|_2 \le  \frac {2\pi} {\eta^2} \}.
\end{align}

Case 1: $S$ is the empty set. In this case by using \eqref{4.5tP0},  we have
\begin{align}
\| h^n \|_2 \le \| h^{n-1} \|_2, \qquad\forall\, n \ge 1.
\end{align}
This immediately yields \eqref{4.5tP1}.

Case 2: $S$ is the set of natural numbers.  Then since all $n\ge 1$ are in the set $S$, the estimate
\eqref{4.5tP1} follows from the Poincar\'e inequality.

Case 3: $S$ is a proper nonempty subset of the set of natural numbers.  We only need to show the uniform
$L^2$ estimate for any $n\notin S$.  Now consider any $n\notin S$.  We discuss two subcases.

Subcase 3a: $S\cap \{ j:\, 1\le j<n \}$ is an empty set. In yet other words, all $j<n$ are not in $S$.
Then in this case by using \eqref{4.5tP0}, we have
\begin{align}
\| h^{j} \|_2 \le \| h^{j-1} \|_2, \qquad\forall\, 1\le j \le n.
\end{align}
Thus this case is OK.

Subcase 3b: There exists some $j_*<n$ with $j_*\in S$. With no loss we assume $[j_*+1, n]\cap S$
is an empty set.  By using the Poincar\'e inequality, we have
\begin{align}
\| h^{j_*} \|_2 \le \frac {2\pi} {\eta^2}.
\end{align}
For all $j_*+1\le j \le n$, by using \eqref{4.5tP0} we have
\begin{align}
\| h^j\|_2 \le \| h^{j-1} \|_2.
\end{align}
Thus in this case we obtain
\begin{align}
\| h^n\|_2 \le \frac {2\pi} {\eta^2}.
\end{align}

Collecting the estimates, we have proved \eqref{4.5tP1}. Now to obtain the uniform $H^2$-estimate, we argue as follows.

Clearly, with no loss we can assume $\tau> 8 \eta^2$ since the case $0<\tau \le 8 \eta^2$
is already covered by Theorem \ref{thm3.1}.

We rewrite \eqref{s3.1} as
\begin{align}
\eta^2 \Delta^2 h^{n+1}
= - \frac {h^{n+1}-h^n} {\tau} + \nabla \cdot (g(\nabla h^n) ).
\end{align}
Since $\|g\|_{\infty} \le 1$, we have
\begin{align} \label{4.15tP1}
\|(-\Delta)^{-1} \nabla \cdot ( g(\nabla h^n) ) \|_2 \le O(1),
\end{align}
where $O(1)$ denotes a constant depending only on $(h^0, \eta)$.
Clearly by using \eqref{4.5tP1}, \eqref{4.15tP1} and the fact that $\tau \gtrsim 1$, we obtain
\begin{align}
\eta^2 \| \Delta h^{n+1} \|_2  \le O(1), \qquad\forall\, n\ge 0.
\end{align}
\end{proof}

\begin{rem}
Our results can be generalized to more general nonlinearities with bounded derivatives. For example,
consider the following Cahn-Hilliard type equation
\begin{align}
\partial_t u = -\Delta ( \eta^2 \Delta u - f( u) ), \qquad(t,x) \in (0,\infty)\times \mathbb T^2,
\end{align}
where $u$ is a scalar-valued function. Assume $f(z)=F^{\prime}(z)$ and
\begin{align}
\| F\|_{\infty} \le N_0<\infty, \; \|f\|_{\infty} \le N_1<\infty, \; \sup_{z\in \mathbb R}  f^{\prime} (z) \le N_2<\infty,
\end{align}
where $N_i>0$, $i=0,1,2$ are constants.
Denote
\begin{align}
\mathcal E ( u) = \int_{\mathbb T^2}
\left( \frac 12 \eta^2 \| \nabla  u\|_2^2  + F(u) \right) dx.
\end{align}
Consider the first order IMEX discretization of the form:
\begin{align}
\frac {u^{n} -u^{n-1} }{\tau}
= -\Delta ( \eta^2 \Delta u^n  - f(u^{n-1} ) ), \quad n\ge 1.
\end{align}
Assume $u^0 \in H^1(\mathbb T^2)$ with mean zero. Then
\begin{align}
\frac 1 {\tau} \| |\nabla|^{-1} ( u^{n}-u^{n-1} ) \|_2^2
+ \frac {\eta^2}2 \| \nabla (u^{n}-u^{n-1}) \|_2^2 +
\mathcal E(u^n ) - \mathcal E (u^{n-1} ) \le\frac 12 N_2 \| u^n -u^{n-1} \|_2^2, \quad\forall\, n\ge 1.
\end{align}
It follows that if $0<\tau\le \frac {8\eta^2} {N_2^2}$, then we obtain
\begin{align} \label{t4.22.0}
\mathcal E (u^n) \le \mathcal E(u^{n-1}), \qquad \forall\, n\ge 1.
\end{align}
By using \eqref{t4.22.0} (to control the regime $\tau \lesssim 1$), we can
establish the following bound which is an analogue of Theorem \ref{thm4.1}:
for any $\tau>0$, it holds that
\begin{align}
\sup_{n\ge 1 } \| u^n \|_{H^1(\mathbb T^2)} \le D_1<\infty,
\end{align}
where $D_1$ depends on  $(\eta^2, N_0,N_1, N_2, u^0)$.  Note that $D_1$ is
{independent of $\tau$}.
 \end{rem}

 \begin{rem}
 In \cite{SY10} Shen and Yang introduced effective
Lipschitz truncation of the nonlinearity together with adding suitable stabilization terms order to prove the unconditional energy stability of numerical
schemes for Allen-Cahn and Cahn-Hilliard equations.
As shown in the preceding remark, our analysis encompasses the truncated models in \cite{SY10}
as special cases. In particular, for these truncated models we obtain uniform energy bounds
\textbf{with no conditions on the time step}. Our analysis also extends to higher order in time
methods, see later sections for details.
\end{rem}

\section{Energy decay for second-order BDF2 scheme}
We consider the following BDF2 scheme:
\begin{equation}\label{eq:sch2}
\frac{3h^{n+1}-4h^n+h^{n-1}}{2\tau}=-\eta^2\Delta^2 h^{n+1}+2 \nabla \cdot (g(\nabla h^n))-
\nabla \cdot (g ( \nabla h^{n-1})), \qquad n\ge 1,
\end{equation}
where
\begin{align}
g(z) =- \frac {\sin (|z|) }{|z|} z, \quad z\in \mathbb R^2.
\end{align}

To kick start the scheme we can compute $h^1$ using a first order scheme such as \eqref{s3.1}.

\begin{lem} \label{gLip0.1}
For  $g(z) =- \frac {\sin (|z|) }{|z|} z$,  we have
\begin{align}
|g(x)-g(y) | \le  |x-y|, \qquad\forall\, x, y \in \mathbb R^2,
\end{align}
Here $|z|=\sqrt{z_1^2+z_2^2}$ is the usual $l_2$-norm on $\mathbb R^2$.
\end{lem}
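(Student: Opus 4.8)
The plan is to recognize that $g$ is itself a gradient field and then control its Jacobian using Lemma \ref{Ls3.2}. Writing $G(z)=\cos|z|$ as in that lemma, a direct differentiation gives $\nabla G(z) = -\tfrac{\sin|z|}{|z|}z = g(z)$, so $g=\nabla G$ and the Jacobian $Dg(z)$ is exactly the Hessian $(\partial_{z_i}\partial_{z_j}G(z))_{i,j}$ computed in \eqref{e3.5_0}. Since $\mathrm{sinc}$ is an even analytic function, $g$ is smooth on all of $\mathbb R^2$, including the origin, so by the fundamental theorem of calculus along the segment joining $x$ and $y$,
\[
g(x)-g(y) = \int_0^1 Dg\bigl(y+s(x-y)\bigr)(x-y)\,ds,
\]
whence $|g(x)-g(y)| \le \bigl(\sup_z \|Dg(z)\|_{\mathrm{op}}\bigr)\,|x-y|$. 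Thus it suffices to prove that the operator norm of the symmetric matrix $Dg(z)$ is at most $1$ for every $z$.

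Next I would diagonalize $Dg(z)=\mathrm{Hess}\,G(z)$. For $z\ne 0$ the matrix in \eqref{e3.5_0} is a linear combination of the identity and the rank-one projection onto $\mathrm{span}(z)$; evaluating the quadratic form along $z$ and along $z^{\perp}$—exactly as in the two displayed lines of the proof of Lemma \ref{Ls3.2}—shows that its eigenvalues are $-\cos|z|$ (with eigenvector $z$) and $-\tfrac{\sin|z|}{|z|}$ (with eigenvector $z^{\perp}$). Both eigenvalues lie in $[-1,1]$: indeed $|\cos|z|| \le 1$, while $\bigl|\tfrac{\sin|z|}{|z|}\bigr| \le 1$ because $|\sin t|\le|t|$ for all $t$. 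Hence $\|Dg(z)\|_{\mathrm{op}} = \max\bigl(|\cos|z||,\ \bigl|\tfrac{\sin|z|}{|z|}\bigr|\bigr) \le 1$ for $z\ne 0$, and by continuity the bound persists at $z=0$ (where $Dg(0)=-I$). Combining this with the mean value inequality above yields the claim.

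I expect the only genuine subtlety to be that Lemma \ref{Ls3.2} as stated supplies only the \emph{upper} bound on the quadratic form, i.e.\ that the top eigenvalue of $\mathrm{Hess}\,G$ is at most $1$; to bound the operator norm I also need the matching \emph{lower} bound $-|x|^2 \le \sum_{i,j} x_i x_j\,\partial_{z_i}\partial_{z_j}G(z)$, which is not literally what the lemma asserts. This is precisely where the explicit diagonalization is required, and it costs nothing extra since the two eigenvalues are available in closed form. The remaining points—smoothness of $g$ at the origin and the passage from a uniform Jacobian bound to the Lipschitz estimate—are routine.
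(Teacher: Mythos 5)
Your proof is correct and follows essentially the same route as the paper: the fundamental theorem of calculus along the segment joining $x$ and $y$, followed by explicit diagonalization of the Jacobian $Dg(z)$ with eigenvalues $-\cos|z|$ and $-\tfrac{\sin|z|}{|z|}$, both lying in $[-1,1]$. Your side observation that Lemma \ref{Ls3.2} alone is insufficient (it supplies only a one-sided bound on the quadratic form) is apt, and it matches what the paper actually does, since its proof likewise rests on the full eigenvalue computation rather than on that lemma.
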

\begin{proof}
By using the Fundamental Theorem of Calculus, we have
\begin{align}
g(x)-g(y) = \int_0^1 (D g) (y+\theta(x-y) ) d\theta (x-y).
\end{align}
By \eqref{e3.5_0}, we have
\begin{align}
(Dg )(z)=\left(-\cos |z|+ \frac{\sin |z|} {|z|}\right)
\frac{z_i z_j}{|z|^2} -\frac {\sin |z|} {|z|} \delta_{ij}.
\end{align}
Consider the matrix $(Dg)(z)$ and denote $s=|z|$. It is not difficult to check that its
eigen-values are given by
\begin{align}
\lambda_1=-\cos s, \quad \lambda_2 = -\frac{\sin s} s.
\end{align}

 The desired result then follows.
\end{proof}

For the second order scheme \eqref{eq:sch2},
we shall establish the dissipation law for a slightly modified energy. Namely denote
\begin{align} \label{EnMod}
\widetilde E_n  &= E_n+\frac 1{4\tau} \|  h^{n}- h^{n-1}\|_2^2 +\frac {1} 2
\| \nabla h^n-\nabla h^{n-1} \|_2^2\notag \\
&=\frac 12 \eta^2 \|\nabla h^{n}\|_2^2 +\int_{\mathbb T^2} \cos|\nabla h^n| dx+\frac 1{4\tau} \| h^{n}- h^{n-1}\|_2^2+\frac {1}2
\| \nabla h^n-\nabla h^{n-1} \|_2^2.
\end{align}
Note that if we have uniform control of all $\nabla h^n$, it is reasonable to expect that
$\|\nabla h^n - \nabla h^{n-1} \|_2 = O(\tau)$ which gives
\begin{align}
|\widetilde E_n -E_n| = O(\tau).
\end{align}
This suggests that the modified energy $\widetilde E_n$ is a fairly good approximation of the original
energy $E_n$.

\begin{thm}[Energy dissipation]\label{thm5.1}
Consider the scheme \eqref{eq:sch2}. Assume  $h^0$, $h^1 \in H^2(\mathbb T^2)$.
%satisfies
%\begin{align} \label{h1h0}
%\int_{\mathbb T^2} h^1 dx = \int_{\mathbb T^2} h^0 dx.
%\end{align}

Assume
\begin{align}
0<\tau \le \alpha_1 \eta^2, \qquad\alpha_1=\frac 89 \approx 0.8889.
\end{align}
Then
\begin{equation}
\widetilde E_{n+1} \le  \widetilde E_{n}, \quad \forall\, n\ge 1,
\end{equation}
where %$\widetilde E_n$ was defined in \eqref{EnMod}.
\begin{align}
\widetilde E_n  &= E_n+\frac 1{4\tau} \|  h^{n}- h^{n-1}\|_2^2 +\frac {1}{2}
\| \nabla h^n-\nabla h^{n-1} \|_2^2.
\end{align}
Furthermore, if 
\begin{align} \label{furt001}
\frac 1 {\tau} \| h^1- h^0\|_2^2 \le \alpha_2,
\end{align}
where $\alpha_2>0$ is a constant, then 
\begin{align} \label{5.13}
\sup_{n\ge 2} ( \| h^n \|_2 + \| \Delta h^n \|_2 ) \le \widetilde{C}_1 <\infty,
\end{align}
where $\widetilde{C}_1$ depends only on ($\eta$, $h^0$, $h^1$, $\alpha_2$).
\end{thm}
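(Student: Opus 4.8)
The plan is to prove the dissipation inequality first and then read off the uniform bound \eqref{5.13} as a consequence of the monotonicity of $\widetilde E_n$. For the dissipation I would test the scheme \eqref{eq:sch2} in $L^2$ against the increment $d^{n+1}:=h^{n+1}-h^n$, exactly as in the proof of Theorem \ref{thm3.1}, and track the three contributions separately. Writing the BDF2 numerator as $3h^{n+1}-4h^n+h^{n-1}=3d^{n+1}-d^n$ and using the polarization identity $(d^n,d^{n+1})=\tfrac12(\|d^{n+1}\|_2^2+\|d^n\|_2^2-\|d^{n+1}-d^n\|_2^2)$, the time-difference term produces $\frac{1}{4\tau}(\|d^{n+1}\|_2^2-\|d^n\|_2^2)$ — precisely the increment of the first correction term in \eqref{EnMod} — together with a genuinely dissipative remainder $\frac1\tau\|d^{n+1}\|_2^2+\frac1{4\tau}\|d^{n+1}-d^n\|_2^2$. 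The biharmonic term, after integration by parts on the torus and one more polarization identity, yields $\frac{\eta^2}{2}(\|\Delta h^{n+1}\|_2^2-\|\Delta h^n\|_2^2)$ plus the dissipation $\frac{\eta^2}{2}\|\Delta d^{n+1}\|_2^2$.

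The nonlinear term is the crux and dictates the shape of $\widetilde E_n$. I would split the explicit extrapolation as $2g(\nabla h^n)-g(\nabla h^{n-1})=g(\nabla h^n)+\big(g(\nabla h^n)-g(\nabla h^{n-1})\big)$. Since $g(z)=\nabla_z\cos|z|$, for the first piece I reuse the Taylor expansion of $G(z)=\cos|z|$ together with the Hessian bound of Lemma \ref{Ls3.2}, exactly as in Theorem \ref{thm3.1}, to obtain $-(g(\nabla h^n),\nabla d^{n+1})\le -\int_{\mathbb T^2}(\cos|\nabla h^{n+1}|-\cos|\nabla h^n|)\,dx+\frac12\|\nabla d^{n+1}\|_2^2$, which supplies the $\int\cos|\nabla h|$ part of $E_{n+1}-E_n$. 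For the second piece I invoke the global Lipschitz bound $\|g(\nabla h^n)-g(\nabla h^{n-1})\|_2\le\|\nabla h^n-\nabla h^{n-1}\|_2=\|\nabla d^n\|_2$ from Lemma \ref{gLip0.1}, so that Cauchy--Schwarz and Young give $-(g(\nabla h^n)-g(\nabla h^{n-1}),\nabla d^{n+1})\le\frac12\|\nabla d^n\|_2^2+\frac12\|\nabla d^{n+1}\|_2^2$. The appearance of $\frac12\|\nabla d^n\|_2^2$ is exactly what the term $\frac12\|\nabla h^n-\nabla h^{n-1}\|_2^2$ in \eqref{EnMod} is designed to absorb: it cancels against the corresponding piece of $\widetilde E_{n+1}-\widetilde E_n$, and after collecting everything (and discarding the harmless $-\frac1{4\tau}\|d^{n+1}-d^n\|_2^2$) one is left with $\widetilde E_{n+1}-\widetilde E_n\le \frac32\|\nabla d^{n+1}\|_2^2-\frac1\tau\|d^{n+1}\|_2^2-\frac{\eta^2}{2}\|\Delta d^{n+1}\|_2^2$. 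Finally, applying the interpolation inequality $\|\nabla d^{n+1}\|_2^2\le\|d^{n+1}\|_2\|\Delta d^{n+1}\|_2$ and Young's inequality, the right-hand side is nonpositive as soon as $(\tfrac32)^2\le \frac{2\eta^2}{\tau}$, i.e. $\tau\le\frac89\eta^2=\alpha_1\eta^2$, which is the claimed threshold.

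For the uniform bound I would simply iterate the dissipation law: under $\tau\le\alpha_1\eta^2$ it gives $\widetilde E_n\le\widetilde E_{n-1}\le\cdots\le\widetilde E_1$ for all $n\ge 2$. The startup hypothesis \eqref{furt001} bounds the correction term by $\frac1{4\tau}\|h^1-h^0\|_2^2\le\frac{\alpha_2}{4}$, while the remaining pieces of $\widetilde E_1$ depend only on $(h^0,h^1,\eta)$; hence $\widetilde E_1\le C(\eta,h^0,h^1,\alpha_2)$, independently of $\tau$. Since $\cos\ge -1$ and $|\mathbb T^2|=4\pi^2$, the original energy is coercive, $E_n\ge\frac{\eta^2}{2}\|\Delta h^n\|_2^2-4\pi^2$, and the chain $E_n\le\widetilde E_n\le\widetilde E_1$ gives a $\tau$-independent bound on $\|\Delta h^n\|_2$. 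The mean $\bar h^n=\int_{\mathbb T^2}h^n\,dx$ obeys the scalar recursion $3\bar h^{n+1}-4\bar h^n+\bar h^{n-1}=0$ (all right-hand terms of \eqref{eq:sch2} being divergences) and is therefore bounded, so the Poincar\'e inequality upgrades the $\|\Delta h^n\|_2$ bound to the full $H^2$ estimate \eqref{5.13}.

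The main obstacle is the dissipation step, and specifically the bookkeeping that makes the modified energy \eqref{EnMod} work. One must verify that the Lipschitz-generated cross term $\frac12\|\nabla d^n\|_2^2$ cancels exactly against the gradient-difference correction in $\widetilde E_n$, and that the surviving coefficient $\frac32$ in front of $\|\nabla d^{n+1}\|_2^2$ is balanced sharply, through the interpolation inequality, against the two dissipative terms $\frac1\tau\|d^{n+1}\|_2^2$ and $\frac{\eta^2}{2}\|\Delta d^{n+1}\|_2^2$; it is precisely this balance that pins down the explicit constant $\alpha_1=8/9$. Everything else is routine once the correct multiplier $d^{n+1}$ and the correct modified energy are in place.
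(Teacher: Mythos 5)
Your proposal is correct and follows essentially the same route as the paper: testing against $h^{n+1}-h^n$, the same splitting of the extrapolated nonlinearity into a Taylor/Hessian piece (Lemma \ref{Ls3.2}) and a Lipschitz piece (Lemma \ref{gLip0.1}), and the same interpolation--Young balance $\sqrt{2\eta^2/\tau}\ge \tfrac32$ giving $\alpha_1=\tfrac89$. Your final step is in fact more detailed than the paper's (which dismisses \eqref{5.13} as ``obvious''), since you correctly note that the mean of $h^n$ obeys the bounded recursion $3\bar h^{n+1}-4\bar h^n+\bar h^{n-1}=0$, which is needed to pass from the $\|\Delta h^n\|_2$ bound to the full estimate via Poincar\'e.
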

\begin{rem}
We note that the assumption \eqref{furt001} is quite reasonable since typically
$h^1- h^0 =O(\tau)$ if $h^1$ is computed by the first order IMEX scheme.
\end{rem}

\begin{proof}
%By using \eqref{h1h0} and an induction argument, we have
%\begin{align}
%\int_{\mathbb T^2} h^n dx = \int_{\mathbb T^2} h^0 dx, \qquad\forall\, n\ge 1.
%\end{align}
%In particular the mean of  $h^n-h^j$ is zero for any $n, j\ge 0$.

Denote
\begin{align}
\delta h^n = h^n -h^{n-1}.
\end{align}

Taking the $L^2$-inner product with $\delta h^{n+1}$ on both sides of \eqref{eq:sch2}, we obtain
\begin{align}\label{eq:bdf2int}
 & (\frac{3h^{n+1}-4h^n+h^{n-1}}{2\tau}, \delta h^{n+1})
+\frac 1{2} \eta^2( \| \Delta h^{n+1} \|_2^2 -\| \Delta h^n \|_2^2
 + \| \Delta( \delta h^{n+1})  \|_2^2)  \notag \\
&=\;-(g(\nabla h^n), \nabla (\delta h^{n+1}) )- (g(\nabla h^n)-g(\nabla h^{n-1}), \nabla (\delta h^{n+1}) ).
\end{align}

Observe that
\begin{align}  \label{5.9t0}
\frac{3h^{n+1}-4h^n+h^{n-1}}{2\tau}
&= \frac {h^{n+1}-h^n}{\tau} + \frac{h^{n+1}-2h^n+h^{n-1}}{2\tau} \notag \\
& = \frac {\delta h^{n+1} } {\tau} + \frac {\delta h^{n+1} -\delta h^n} {2\tau}.
\end{align}

By using \eqref{5.9t0}, we  rewrite
\begin{equation}
(\frac{3h^{n+1}-4h^n+h^{n-1}}{2\tau}, \delta h^{n+1})  = \frac 1 \tau \|\delta h^{n+1}\|^2+ \frac 1{4\tau}\left( \|\delta h^{n+1}\|^2 -  \|\delta h^{n}\|^2+  \|\delta h^{n+1}-\delta h^n\|^2\right).
\end{equation}

By using the proof of Theorem \ref{thm3.1}, we have
\begin{align}
-(g(\nabla h^n), \nabla (h^{n+1}-h^n) )\le F_n - F_{n+1} + \frac {1}2 \| \nabla (\delta h^{n+1})\|_2^2,
\end{align}
where
\begin{align}
F_n = \int_{\mathbb T^2} \cos|\nabla h^n| dx.
\end{align}

By Lemma \ref{gLip0.1}, we have
\begin{equation}
\begin{aligned}
-( g(\nabla h^n) -g(\nabla h^{n-1}), \nabla (\delta h^{n+1}) )
& \le \| \nabla \delta h^n\|_2 \| \nabla \delta h^{n+1}\|_2.
\end{aligned}
\end{equation}
Collecting the estimates, we have
\begin{equation}
\begin{aligned}	
& \frac 1 \tau \|\delta h^{n+1}\|^2+ \frac 1{4\tau}\left( \|\delta h^{n+1}\|^2 -  \|\delta h^{n}\|^2+  \|\delta h^{n+1}-\delta h^n\|^2\right) +  \frac 12 \eta^2\|\Delta (\delta h^{n+1} )\|_2^2 \\
&\le  E_n-E_{n+1} + \frac {1}2 \| \nabla (\delta h^{n+1})\|_2^2+
 \| \nabla \delta h^n\|_2 \| \nabla \delta h^{n+1}\|_2.
\end{aligned}
\end{equation}
Thus we obtain
\begin{equation}
\begin{aligned}
& E_{n+1} -E_n + \frac 1{4\tau} \|\delta h^{n+1}\|^2 - \frac 1{4\tau} \|\delta h^{n}\|^2
+(\sqrt{\frac {2\eta^2} \tau} -\frac {1}2) \| \nabla \delta h^{n+1} \|_2^2\\
& \le   \frac {1}2 \| \nabla \delta h^{n+1} \|_2^2 + \frac {1}2 \| \nabla \delta h^n \|_2^2.
\end{aligned}
\end{equation}
Clearly if
\begin{align}
\sqrt{\frac {2\eta^2}{\tau} } \ge \frac {1}2 + 1=\frac 32,
\end{align}
then we have decay of the modified energy. The estimate \eqref{5.13} is obvious.
\end{proof}

\section{Uniform boundedness of energy for any $\tau>0$: the BDF2 case}

\begin{thm}[Uniform boundedness of energy for arbitrary time step]\label{thm6.1}
Consider the scheme \eqref{eq:sch2}. Assume  $h^0$, $h^1 \in H^2(\mathbb T^2)$ 
satisfy
\begin{align} \label{h1h0}
\int_{\mathbb T^2} h^1 dx = \int_{\mathbb T^2} h^0 dx,
\end{align}
and 
\begin{align} \label{h1h0.1}
\frac 1 {\tau} \| h^1- h^0\|_2^2 \le \alpha_2,
\end{align}
where $\alpha_2>0$ is a constant.

Then  for any $\tau>0$, it holds that
\begin{align}
\sup_{n\ge 2} (\| h^n \|_{2} + \| \Delta h^n \|_2 ) \le B_1<\infty,
\end{align}
where $B_1>0$ depends only on ($h^0$, $h^1$, $\eta$, $\alpha_2$).  Note that $B_1$ is independent of $\tau$.

\end{thm}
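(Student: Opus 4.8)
The plan is to handle two ranges of $\tau$ separately and glue them, exactly mirroring the logic by which Theorem \ref{thm4.1} upgrades the conditional Theorem \ref{thm3.1} in the IMEX case. For small steps $0<\tau\le\frac{8}{9}\eta^2$ nothing new is needed: the hypotheses \eqref{h1h0.1} are precisely those of Theorem \ref{thm5.1}, so that theorem already delivers \eqref{5.13}. (Concretely, $\widetilde E_{n}\le\widetilde E_1$, and since $\cos|\nabla h^n|\ge -1$ one gets $\frac{\eta^2}{2}\|\Delta h^n\|_2^2\le E_n+|\Omega|\le\widetilde E_1+|\Omega|$, while \eqref{h1h0} keeps all iterates mean-zero so Poincaré converts this into control of $\|h^n\|_2$.) Hence the entire content is the regime of large $\tau$, and the heart of the matter is a uniform $L^2$ bound there that does not see $\tau$.

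For the $L^2$ bound I would test \eqref{eq:sch2} with $h^{n+1}$ and use the BDF2 $G$-stability identity
\begin{equation}
(3h^{n+1}-4h^n+h^{n-1},h^{n+1})=\tfrac12\bigl(\mathcal Q_{n+1}-\mathcal Q_n\bigr)+\tfrac12\|h^{n+1}-2h^n+h^{n-1}\|_2^2,
\end{equation}
where $\mathcal Q_n:=\|h^n\|_2^2+\|2h^n-h^{n-1}\|_2^2$. Since $\|g\|_\infty\le1$, integration by parts together with $\|\nabla h^{n+1}\|_1\le 2\pi\|\nabla h^{n+1}\|_2\le 2\pi\|\Delta h^{n+1}\|_2$ bounds the nonlinear right-hand side $-(2g(\nabla h^n)-g(\nabla h^{n-1}),\nabla h^{n+1})$ by $6\pi\|\Delta h^{n+1}\|_2$, yielding the one-step inequality
\begin{equation}
\frac{1}{4\tau}\bigl(\mathcal Q_{n+1}-\mathcal Q_n\bigr)+\eta^2\|\Delta h^{n+1}\|_2^2\le 6\pi\|\Delta h^{n+1}\|_2.
\end{equation}
Set $M:=6\pi/\eta^2$. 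As in Theorem \ref{thm4.1} I split on $\|\Delta h^{n+1}\|_2$: if $\|\Delta h^{n+1}\|_2\le M$ then the newest iterate is controlled outright, $\|h^{n+1}\|_2\le\|\Delta h^{n+1}\|_2\le M$; if $\|\Delta h^{n+1}\|_2> M$ the right-hand side is negative, so $\mathcal Q_{n+1}<\mathcal Q_n$ strictly. Once the $L^2$ bound is in hand, the $H^2$ bound is cheap and is exactly where largeness of $\tau$ enters: rewriting \eqref{eq:sch2} as $\eta^2\Delta^2h^{n+1}=-\frac{3h^{n+1}-4h^n+h^{n-1}}{2\tau}+2\nabla\!\cdot g(\nabla h^n)-\nabla\!\cdot g(\nabla h^{n-1})$ and applying $(-\Delta)^{-1}$, boundedness of $(-\Delta)^{-1}\nabla\cdot$ on $L^2$ and $\|g\|_\infty\le1$ give $\eta^2\|\Delta h^{n+1}\|_2\le\frac{C}{\tau}\sup_m\|h^m\|_2+C'$, and with $\tau\gtrsim1$ this forces $\sup_n\|\Delta h^n\|_2<\infty$ independently of $\tau$, just as in the final step of Theorem \ref{thm4.1}.

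I expect the genuine obstacle to be the $L^2$ bound, because the BDF2 $G$-energy $\mathcal Q_n$ couples two consecutive iterates. In the IMEX case the tracked quantity is literally the newest iterate $\|h^n\|_2$, so ``good'' steps reset it and ``bad'' steps leave it non-increasing; here a bad step only gives $\|h^n\|_2^2\le\mathcal Q_n<\mathcal Q_{n-1}$, and $\mathcal Q_{n-1}$ still carries the old iterate $h^{n-2}$. The naive new-maximum argument run on $\mathcal Q_n$ is therefore vacuous: at a maximum of $\mathcal Q$ one learns $\|h^{n+1}\|_2< M$ but only $\mathcal Q_{n+1}\le M^2+(2M+\sqrt{\mathcal Q_n})^2$ with $\mathcal Q_n$ arbitrarily close to $\mathcal Q_{n+1}$, giving no bound. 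The fix I would pursue is to run the maximum argument on $\|h^n\|_2$ \emph{itself}: at a running maximum $n^\ast$ one has $\|h^{n^\ast-1}\|_2,\|h^{n^\ast-2}\|_2\le\|h^{n^\ast}\|_2$, hence $\mathcal Q_{n^\ast-1}\le 10\,\|h^{n^\ast}\|_2^2$; if $\|h^{n^\ast}\|_2$ is large the step is necessarily bad, and the strongly negative contribution $4\tau\|\Delta h^{n^\ast}\|_2(6\pi-\eta^2\|\Delta h^{n^\ast}\|_2)$ then contradicts $\mathcal Q_{n^\ast}\ge\|h^{n^\ast}\|_2^2$ once $\tau$ exceeds an explicit multiple of $\eta^{-2}$.

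The truly delicate point, and where I would spend most effort, is matching this large-$\tau$ threshold to the $\tau\le\frac{8}{9}\eta^2$ range of Theorem \ref{thm5.1} so that no intermediate window of $\tau$ is left uncovered (the two natural thresholds scale like $\eta^2$ and $\eta^{-2}$, so they overlap only for $\eta=O(1)$). To close this window I would either sharpen the nonlinear estimate beyond the crude $\|g\|_\infty\le1$, or combine the $\mathcal Q$-dissipation with the elliptic inequality $\eta^2\|\Delta h^{n+1}\|_2\le\frac{C}{\tau}\sup_m\|h^m\|_2+C'$ so that any attempted growth of $\sup_m\|h^m\|_2$ is simultaneously capped by the biharmonic term; balancing these two mechanisms across a full good/bad cycle is, I expect, the main technical burden of the proof.
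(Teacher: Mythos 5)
Your reduction for $0<\tau\le\frac89\eta^2$ (invoke Theorem \ref{thm5.1}, then Poincar\'e on the mean-zero iterates) is exactly the paper's first step, and your $G$-stability computation, with $\mathcal Q_n=\|h^n\|_2^2+\|2h^n-h^{n-1}\|_2^2$ and the one-step inequality $\frac{1}{4\tau}(\mathcal Q_{n+1}-\mathcal Q_n)+\eta^2\|\Delta h^{n+1}\|_2^2\le 6\pi\|\Delta h^{n+1}\|_2$, is correct. But your own closing diagnosis is the verdict: the proof is incomplete, because the large-$\tau$ mechanism you build only fires when $\tau\gtrsim\eta^{-2}$ while the small-$\tau$ theorem stops at $\tau\lesssim\eta^2$. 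Concretely, at a running maximum $n^\ast$ of $\|h^n\|_2$, the bad-step branch yields $2\tau\eta^2\|\Delta h^{n^\ast}\|_2^2\le 9\|h^{n^\ast}\|_2^2\le 9\|\Delta h^{n^\ast}\|_2^2$, hence a contradiction only when $\tau>\frac{9}{2\eta^2}$; the factor $9$ is the irreducible slack between $\mathcal Q_{n^\ast-1}\le 10\|h^{n^\ast}\|_2^2$ and $\mathcal Q_{n^\ast}\ge\|h^{n^\ast}\|_2^2$, i.e.\ it is intrinsic to measuring a two-term $G$-norm at a maximum of a one-term quantity. For the regime the paper actually simulates ($\eta=0.1$) the uncovered window is roughly $(0.009,\,450)$, so this is not a technicality. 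Neither of your suggested repairs closes it: $\|g\|_\infty\le1$ cannot be sharpened (it is attained), and the elliptic inequality $\eta^2\|\Delta h^{n+1}\|_2\le\frac C\tau\sup_m\|h^m\|_2+C'$ is \emph{parallel} to, not complementary to, the bad-step information --- both assert $\|\Delta h^{n^\ast}\|_2\lesssim\|h^{n^\ast}\|_2/(\tau\eta^2)$ --- so combining them reproduces the same $\eta^{-2}$ threshold rather than lowering it.

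The idea you are missing, which is the paper's actual proof (Theorem \ref{thm6.1a}), is to abandon energy and $G$-norm estimates altogether once $\tau>\frac89\eta^2$: treat the nonlinearity purely as a bounded forcing $\nabla\cdot f^n$ with $\sup_n\|f^n\|_2<\infty$, and regard \eqref{eq:sch2} as a \emph{linear} two-step recursion diagonalized by Fourier. Writing $h^{n+1}=T\bigl(4h^n-h^{n-1}\bigr)+2\tau T\nabla\cdot f^n$ with $T=(3+2\tau\eta^2\Delta^2)^{-1}$, one factors each mode through the roots of the characteristic polynomial $x^2-4\widehat T(k)x+\widehat T(k)=0$, defining multipliers $T_\pm$ so that $h^{n+1}-T_-h^n=T_+\bigl(h^n-T_-h^{n-1}\bigr)+2\tau T\nabla\cdot f^n$. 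On mean-zero functions one has $|k|\ge1$, hence $\widehat T(k)\le(3+2\tau_0\eta^2)^{-1}<\frac13$ for every $\tau\ge\tau_0$, and both roots satisfy $|\widehat{T_\pm}(k)|\le\theta_0<1$ with $\theta_0$ depending only on $\tau_0\eta^2$; this is precisely where the mean-preservation hypothesis \eqref{h1h0} enters, since at $k=0$ the roots are $1$ and $\frac13$ and the zero mode would not contract. The forcing is also uniformly bounded in $\tau$, since $2\tau|k|\,\widehat T(k)\le\eta^{-2}|k|^{-3}\le\eta^{-2}$. Iterating this contraction-plus-bounded-forcing recursion gives $\sup_n\|h^n\|_2<\infty$, and the $H^2$ bound then follows from the equation exactly as in your last step. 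The decisive structural difference: your mechanism needs the per-step dissipation $\tau\eta^2\|\Delta h\|_2^2$ to beat an $O(1)$ slack, forcing $\tau\eta^2\gtrsim1$, whereas the spectral factorization only needs each nonzero Fourier mode to contract by a fixed factor, which holds for \emph{every} $\tau\ge\tau_0=\frac89\eta^2$ --- the constants degrade as $\eta\to0$, but the theorem permits dependence on $\eta$, only forbidding dependence on $\tau$. This is how the paper glues seamlessly to Theorem \ref{thm5.1} with no intermediate window.
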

\begin{rem}
Note that the assumption \eqref{h1h0} is quite reasonable since the mean of $h$ is preserved in time
for the PDE solution. If we compute $h^1$ using the first order scheme \eqref{s3.1}, then it is easy
to check that \eqref{h1h0} holds. The assumption \eqref{h1h0.1} is also quite reasonable
since typically $h^1- h^0 = O(\tau)$. 
\end{rem}
\begin{proof}
By using \eqref{h1h0} and an induction argument, we have
\begin{align}
\int_{\mathbb T^2} h^n dx = \int_{\mathbb T^2} h^0 dx, \qquad\forall\, n\ge 1.
\end{align}
Denote the average of $h^0$ as $\bar h$ and denote
\begin{align}
y^n = h^n - \bar h.
\end{align}
It is not difficult to check that $y^n$ evolves according to the same scheme \eqref{eq:sch2} where $h^n$ is replaced
by $y^n$.  Thus with no loss we can assume all $h^n$ has mean zero.
Note that we may assume $\tau>\alpha_1 \eta^2$ since the case $0<\tau \le \alpha_1 \eta^2$ is
already covered by Theorem \ref{thm5.1}. With some minor change of notation, our desired result
then follows from Theorem \ref{thm6.1a}.
\end{proof}

Assume $f^n = (f^n_1, f^n_2)$, $n\ge 1$ is a given sequence of functions on $\mathbb T^2$.
Let $u^n$ evolve according to the scheme:
\begin{align} \label{uS0}
\frac {3 u^{n+1} -4 u^n +u^{n-1}} {\tau} =-\Delta^2 u^{n+1}+ \nabla \cdot f^n, \qquad\, n\ge 1,
\end{align}
We have the following uniform boundedness result.
\begin{thm} \label{thm6.1a}
Consider the scheme \eqref{uS0} with $\tau\ge \tau_0>0$. Assume $u^0\in H^2(\mathbb T^2)$,
$u^1 \in H^2(\mathbb T^2)$ and have mean zero. Suppose
\begin{align}
\sup_{n\ge 1} \| f^n \|_2 \le  A_0<\infty.
\end{align}
We have
\begin{align}
\sup_{n\ge 2} (\| u^n \|_2 + \| \Delta u^n \|_2) \le  A_1<\infty,
\end{align}
where $A_1>0$ depends only on ($\tau_0$, $\eta$, $A_0$, $u^0$, $u^1$).
\end{thm}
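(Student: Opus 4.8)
The plan is to follow the blueprint of Theorem \ref{thm4.1}: first establish a uniform-in-$\tau$ bound on $\|u^n\|_2$ (in fact on the natural BDF2 energy), and then bootstrap to the $\|\Delta u^n\|_2$ bound by inverting the biharmonic operator. The workhorse for the three-level term is the $G$-stability identity $(3a-4b+c,a)=\tfrac12(\|a\|_2^2+\|2a-b\|_2^2)-\tfrac12(\|b\|_2^2+\|2b-c\|_2^2)+\tfrac12\|a-2b+c\|_2^2$, which I would apply with $a=u^{n+1}$, $b=u^n$, $c=u^{n-1}$. Writing $\mathcal G_n=\|u^n\|_2^2+\|2u^n-u^{n-1}\|_2^2$, a quantity comparable to $\|u^n\|_2^2+\|u^{n-1}\|_2^2$, this identity converts the scheme into a telescoping inequality.

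Concretely, testing \eqref{uS0} against $u^{n+1}$ and using $(-\Delta^2u^{n+1},u^{n+1})=-\|\Delta u^{n+1}\|_2^2$ and $(\nabla\cdot f^n,u^{n+1})=-(f^n,\nabla u^{n+1})$, together with $|(f^n,\nabla u^{n+1})|\le A_0\|\nabla u^{n+1}\|_2\le\tfrac12\|\Delta u^{n+1}\|_2^2+CA_0^2$ (Poincar\'e on mean-zero functions) and dropping the nonnegative term $\tfrac1{2\tau}\|u^{n+1}-2u^n+u^{n-1}\|_2^2$, I would arrive at $\mathcal G_{n+1}-\mathcal G_n+\tau\|\Delta u^{n+1}\|_2^2\le C\tau A_0^2$, hence by Poincar\'e $\mathcal G_n-\mathcal G_{n-1}+\tau c\|u^n\|_2^2\le C\tau A_0^2$ for $n\ge 2$.

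The main obstacle sits precisely here: the dissipation $\|u^n\|_2^2$ controls only the \emph{current} level, whereas the telescoping energy $\mathcal G_n$ also involves $u^{n-1}$, so $\mathcal G_n$ cannot be absorbed directly into the dissipation, and a naive per-step bound on its increase is $O(\tau)$ and blows up as $\tau\to\infty$. My fix is to add the inequality at step $n$ to the one at step $n-1$: this dissipates \emph{both} $\|u^n\|_2^2$ and $\|u^{n-1}\|_2^2$, whose sum is $\gtrsim\mathcal G_n$. The result is $(1+c'\tau)\mathcal G_n\le\mathcal G_{n-2}+C\tau A_0^2$, and the decisive point is that whenever $\mathcal G_n\ge\mathcal G_{n-2}$ the factors of $\tau$ cancel, giving $\mathcal G_n\le CA_0^2/c'=:K$ independent of $\tau$. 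Thus $\mathcal G_n\le\max(\mathcal G_{n-2},K)$, and a two-step downward induction yields $\sup_n\mathcal G_n\le\max(\mathcal G_1,\mathcal G_2,K)$; the base value $\mathcal G_2$ is controlled by a single application of the scheme at $n=1$, where $\tau\ge\tau_0$ keeps $1/\tau$ bounded. This establishes $\sup_n\|u^n\|_2<\infty$ uniformly in $\tau$.

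For the bootstrap I would rewrite \eqref{uS0} as $\Delta^2u^{n+1}=-\tfrac1\tau(3u^{n+1}-4u^n+u^{n-1})+\nabla\cdot f^n$ and apply $(-\Delta)^{-1}$, giving $\|\Delta u^{n+1}\|_2\le\tfrac1\tau\|(-\Delta)^{-1}(3u^{n+1}-4u^n+u^{n-1})\|_2+\|(-\Delta)^{-1}\nabla\cdot f^n\|_2$. The second term is $\le CA_0$ since $(-\Delta)^{-1}\nabla\cdot$ is bounded on mean-zero $L^2$, while the first is $\le\tfrac{C}{\tau_0}\sup_n\|u^n\|_2$ by the $L^2$ bound and $\tau\ge\tau_0$. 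This closes the $H^2$ estimate with a constant depending only on $(\tau_0,\eta,A_0,u^0,u^1)$. I expect the two-step summation in the $L^2$ step to be the crux; once the $\tau$-independent $L^2$ bound is secured, the bootstrap is routine and identical in spirit to Theorem \ref{thm4.1}.
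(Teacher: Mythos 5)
Your proposal is correct, but it follows a genuinely different route from the paper. The paper's proof is spectral: it rewrites \eqref{uS0} as $u^{n+1}=4Tu^n-Tu^{n-1}+\tau T\nabla\cdot f^n$ with $T=(3+\tau\Delta^2)^{-1}$, factors the two-level recursion mode by mode through Fourier multipliers $T_\pm$ (the roots of $x^2-4\widehat{T}(k)x+\widehat{T}(k)=0$) satisfying $\sup_{k\neq 0}|\widehat{T_\pm}(k)|\le\theta_0<1$ with $\theta_0$ depending only on $\tau_0$, and then iterates the resulting one-step contraction twice --- first for $u^{n+1}-T_-u^n$, then for $u^n$ itself --- to obtain the uniform $L^2$ bound; the $H^2$ bound follows from the same operator identity. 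You instead run an energy argument: the BDF2 $G$-stability identity with $\mathcal G_n=\|u^n\|_2^2+\|2u^n-u^{n-1}\|_2^2$, Poincar\'e to convert the dissipation $\tau\|\Delta u^{n+1}\|_2^2$ into $\tau\|u^{n+1}\|_2^2$, and --- this is your key ingredient, and it is sound --- summation of two consecutive inequalities so that the combined dissipation $\tau(\|u^n\|_2^2+\|u^{n-1}\|_2^2)\gtrsim \tau\,\mathcal G_n$ dominates the full telescoping energy, yielding $\mathcal G_n\le\max(\mathcal G_{n-2},K)$ with $K$ independent of $\tau$, hence a two-step max-induction; your handling of the base value $\mathcal G_2$ via $\tau\ge\tau_0$ (so that $\|u^2\|_2^2\le \mathcal G_1/\tau_0+A_0^2$) correctly removes the only place a raw factor of $\tau$ could enter. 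Both proofs use $\tau\ge\tau_0$ in two places (you: the $\mathcal G_2$ bound and the bootstrap; the paper: the contraction factor $\theta_0$ and the bootstrap), and both $H^2$ bootstraps amount to the same inversion of the elliptic operator with $1/\tau\le 1/\tau_0$. What the paper's route buys is an explicit contraction rate and exponential forgetting of the initial data; what your route buys is independence from constant-coefficient Fourier diagonalization --- it relies only on integration by parts, Poincar\'e, and boundedness of $(-\Delta)^{-1}$ on mean-zero functions, so it would survive variable coefficients or other boundary conditions where the multiplier factorization is unavailable --- together with a quantitative dissipation term as a by-product.
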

\begin{proof}
We first rewrite \eqref{uS0} as
\begin{align} \label{6.8t0}
u^{n+1}= 4T u^n - T u^{n-1} + \tau T \nabla \cdot f^n,
\end{align}
where $T= (3+\tau \Delta^2)^{-1}$. One should note that since we are working with mean-zero
functions, the operator $T$ admits a natural spectral bound, namely
\begin{align}
|\widehat{T}(k) | \le \frac 1 {3 +\tau} \le \frac 1 {3+\tau_0},\quad
\tau |\widehat{T}(k)| \le 1,
 \qquad\forall\, 0\ne k \in \mathbb Z^2.
\end{align}

We now define a pair of Fourier multipliers $r_+$, $r_-$ by
\begin{align}
\widehat{T_+}(k) = \begin{cases}
2\biggl(\widehat{T}(k) +\sqrt{ (\widehat T(k))^2- \frac 14 \widehat T (k) } \biggr),
\qquad \text{if $\widehat T(k) \ge \frac 14$}; \\
2\biggl(\widehat{T}(k) +i \sqrt{ \frac 14 \widehat T(k) - (\widehat T(k) )^2} \biggr),
\qquad \text{if $\widehat T(k) < \frac 14$};
\end{cases} \\
\widehat{T_-}(k) = \begin{cases}
2\biggl(\widehat{T}(k) -\sqrt{ (\widehat T(k))^2- \frac 14 \widehat T (k) }\biggr),
\qquad \text{if $\widehat T(k) \ge \frac 14$}; \\
2\biggl(\widehat{T}(k) -i \sqrt{ \frac 14 \widehat T(k) - (\widehat T(k) )^2}\biggr),
\qquad \text{if $\widehat T(k) < \frac 14$}.
\end{cases}
\end{align}
It is not difficult to check that
\begin{align}
&\sup_{0\ne k\in \mathbb Z^2} |\widehat{T_+}(k)| \le \theta_0<1;  \notag \\
&\sup_{0\ne k\in \mathbb Z^2} |\widehat{T_-}(k)| \le \theta_0<1,
\end{align}
where $\theta_0$ depends only on $\tau_0$. Furthermore, we have
\begin{align}
u^{n+1}- T_- u^n = T_+ ( u^n -T_- u^{n-1}) +  \tau T\nabla \cdot f^n, \qquad\forall\, n\ge 1.
\end{align}
It follows that
\begin{align}
\| u^{n+1}- T_- u^n\|_2 &\le \theta_0 \| u^n -T_- u^{n-1} \|_2+  A_0 \notag \\
& \le \cdots (\text {iterating in $n$} )\notag \\
& \le \theta_0^n \| u^1 - T_- u^0\|_2+ \frac {A_0}{1-\theta_0} \notag \\
& \le  B_1,
\end{align}
where $B_1$ depends only on ($u^0$, $u^1$, $\theta_0$, $A_0$).

We then obtain
\begin{align}
\| u^{n+1} \|_2 \le \theta_0 \| u^n \|_2 +B_1.
\end{align}
Iterating in $n$ yields the uniform $L^2$ bound:
\begin{align}
\sup_{n\ge 1} \| u^{n+1} \|_2 \le B_2,
\end{align}
where $B_2>0$ depends only on ($u^0$, $u^1$, $\eta$, $\tau_0$, $A_0$).
By using \eqref{6.8t0}, we  obtain the $H^2$-bound.
\end{proof}

\section{Numerical experiments}
In this section, we carry out several numerical simulations of the sinc-type MBE model  with side by side
 comparisons  with the classical MBE model with double well potential. The new sinc-type MBE model
 appears to be much more stable than the classical MBE model whilst producing remarkably similar
 energy landscapes.

More precisely, we will show that the  energy of the new model will always stay uniformly bounded, which is in good agreement with the rigorous results proved in Theorem \ref{thm4.1} and \ref{thm6.1}.
Note that in the classical MBE model with double well potential, this issue is completely
 intractable for large time steps unless one adds additional stabilization terms or introduces additional fictitious dynamics.
Moreover, we verify that under very mild and nearly optimal restrictions on the time step $\tau$, the energy dissipation property can always be ensured. These bounds are  very close to the theoretical predictions as shown
in Theorem \ref{thm3.1} (for first order IMEX) and Theorem \ref{thm5.1} (for second order BDF2).

In the following numerical experiments, we use the pseudo-spectral method with the number of Fourier modes $N_x\times N_y = 256\times 256$ for spatial discretization, .

\begin{example} \label{exam2DMBEsin}
{\em Compare the solutions of the classical MBE equation
\begin{align}\label{eq:mbe_class}
& \partial_t h = -\eta^2 \Delta^2 h  -\nabla \cdot ( (1-|\nabla h|^2)\nabla h), \quad\mbox{on }\mathbb T^2=[-\pi,\pi]^2,
\end{align}
and the 2D sinc-type MBE equation
\begin{align}\label{eq:mbe_sinc}
& \partial_t h = -\eta^2 \Delta^2 h   -\nabla \cdot ( \mathrm{sinc}( |\nabla h|)\nabla h), \quad\mbox{on }\mathbb T^2=[-\pi,\pi]^2,
\end{align}
with initial condition $h_0(x,y)= 0.1(\sin(3x)\sin(2y)+\sin(5x)\sin(5y))$.
}
\end{example}

Firstly we compare the numerical phases of the classical MBE equation and the sinc-type MBE equation computed by the first-order IMEX scheme respectively in Figure \ref{fig:imex1} and \ref{fig:imex2}.
Here we take $\eta = 0.1$ and the time step $\tau = 0.001$.
The corresponding energy evolutions are illustrated in Figure \ref{fig:comp_energy}.

\begin{figure}[!h]
\centering
\includegraphics[width=0.33\textwidth]{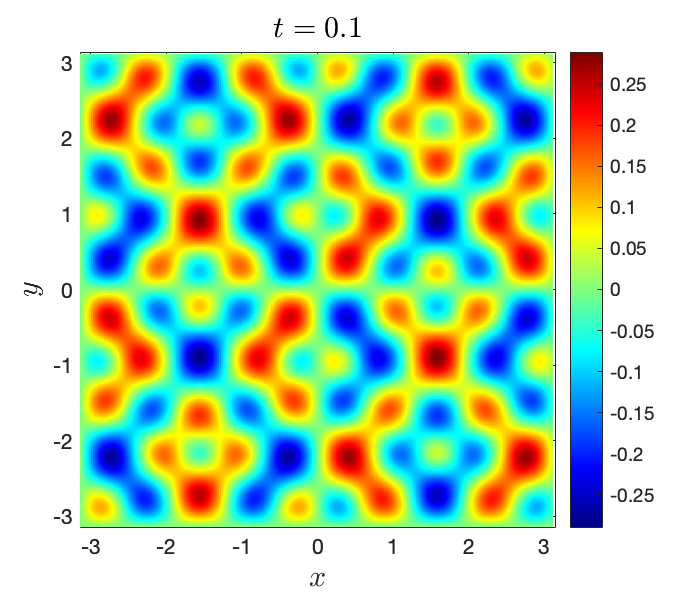}
\includegraphics[width=0.33\textwidth]{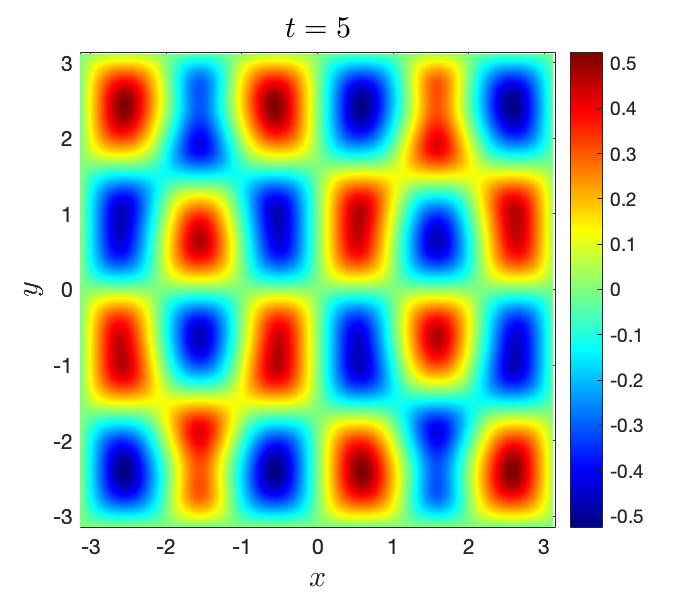}\\
\includegraphics[width=0.33\textwidth]{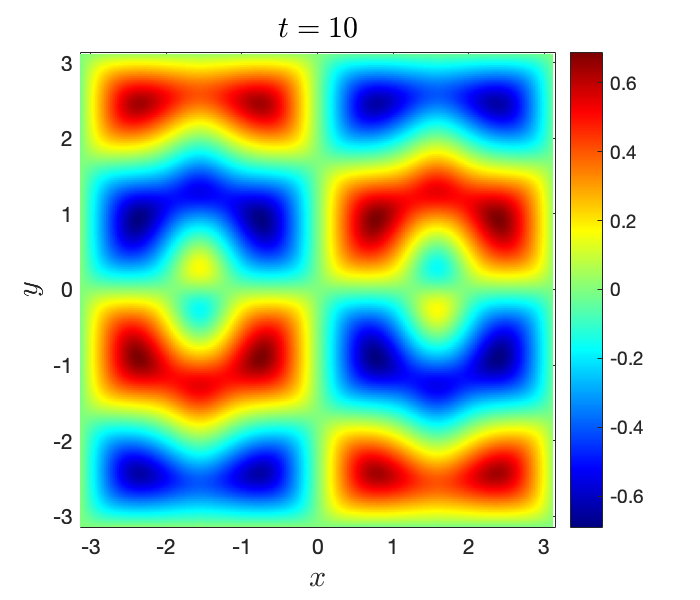}
\includegraphics[width=0.33\textwidth]{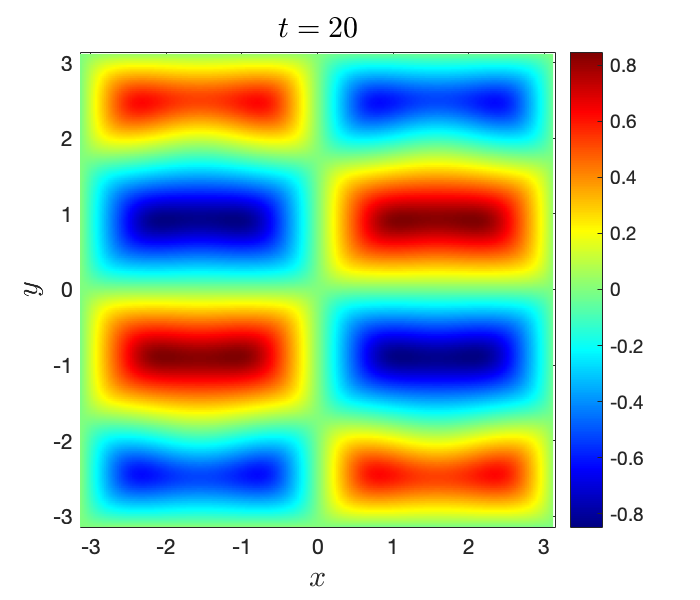}
\caption{\small Example \ref{exam2DMBEsin}: Dynamics of  2D classical MBE equation \eqref{eq:mbe_class} using the first-order IMEX scheme where $\eta = 0.1,~\tau= 0.001,~N_x=N_y = 256$. }\label{fig:imex1}
\centering
\includegraphics[width=0.33\textwidth]{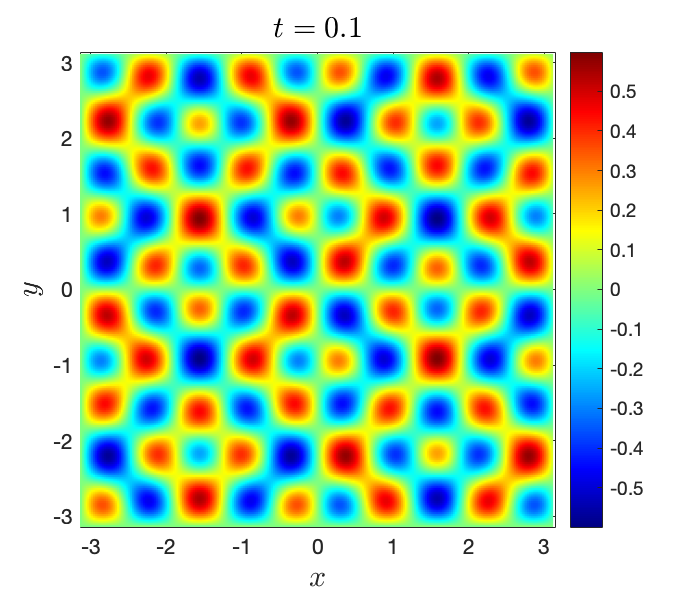}
\includegraphics[width=0.33\textwidth]{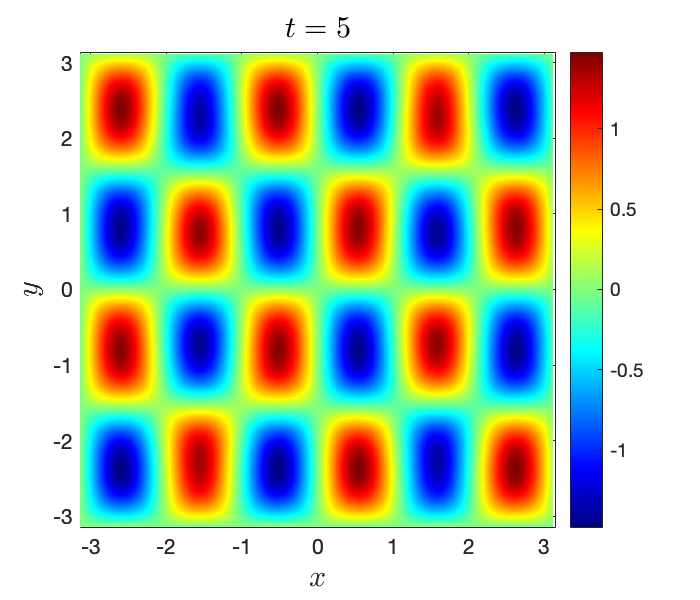}\\
\includegraphics[width=0.33\textwidth]{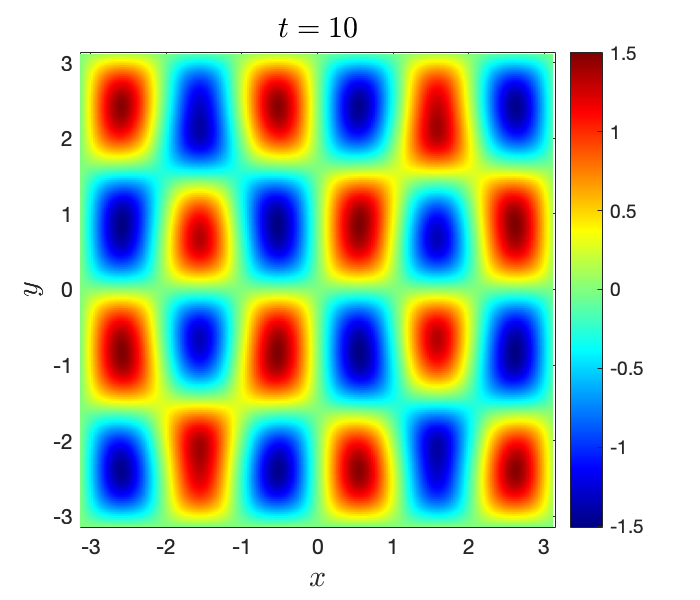}
\includegraphics[width=0.33\textwidth]{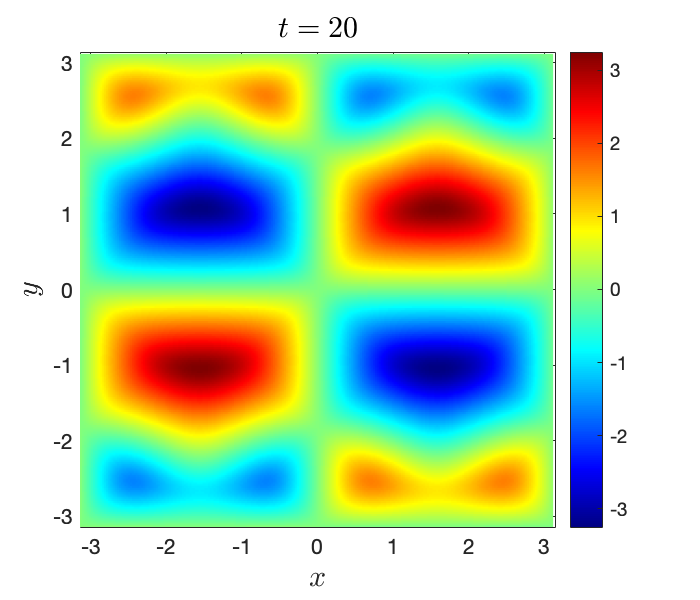}
\caption{\small Example \ref{exam2DMBEsin}: Dynamics of 2D sinc-type MBE equation \eqref{eq:mbe_sinc} using the first-order IMEX scheme where $\eta = 0.1,~\tau= 0.001,~N_x=N_y = 256$.  }\label{fig:imex2}
%\centering
%\includegraphics[width=0.33\textwidth]{MBE2Dsin2_1.png}
%\includegraphics[width=0.33\textwidth]{MBE2Dsin2_2.png}\\
%\includegraphics[width=0.33\textwidth]{MBE2Dsin2_3.png}
%\includegraphics[width=0.33\textwidth]{MBE2Dsin2_4.png}
%\caption{\small Example \ref{exam2DMBEsin}: Dynamics of 2D MBE equation using the second-order BDF2 scheme where $\tau= 0.001,~N_x=N_y = 256$. }\label{fig:bdf2}
\end{figure}

\begin{figure}[!h]
\centering
\includegraphics[width=0.43\textwidth]{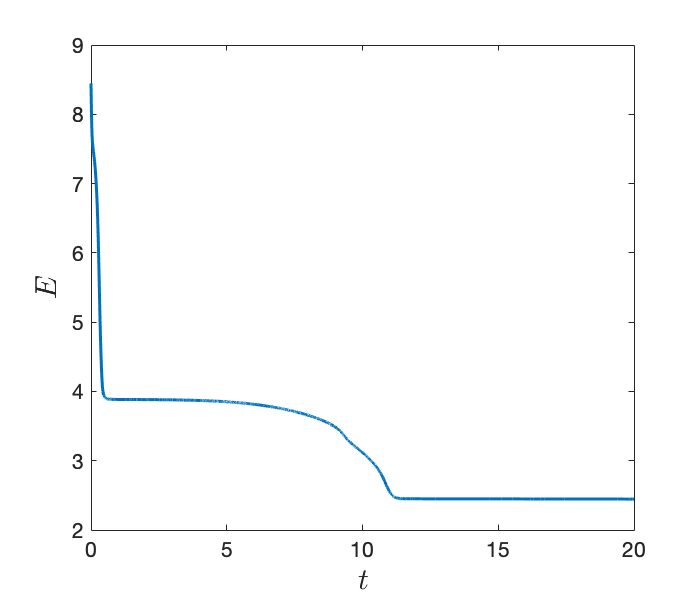}
\includegraphics[width=0.43\textwidth]{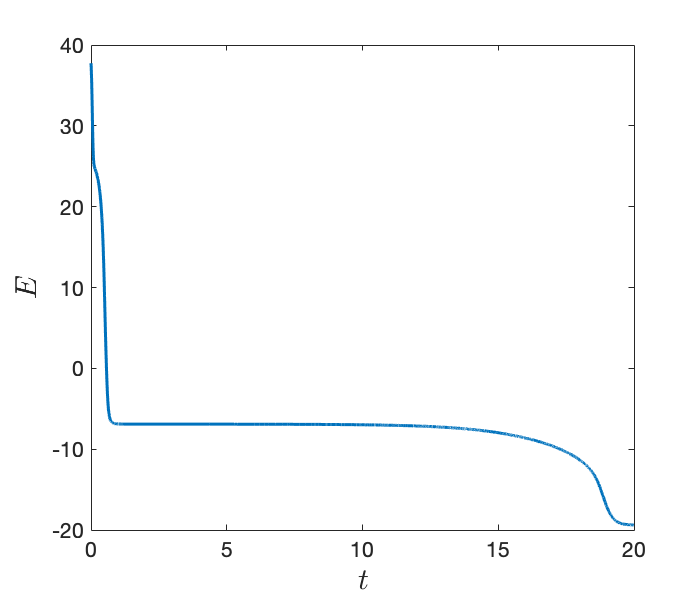}\\
\caption{\small Example \ref{exam2DMBEsin}: Energy evolutions of 2D classical MBE equation \eqref{eq:mbe_class} (left) and the sinc-type MBE equation \eqref{eq:mbe_sinc} (right) computed by the first-order IMEX scheme where $\tau= 0.001,~N_x=N_y = 256$.  }\label{fig:comp_energy}
\end{figure}

Secondly, we compare the energy evolutions of the classical MBE and the sinc-type MBE for different time step $\tau$ (see Figure \ref{fig:blowup}).
It is observed that for the classical MBE model, the energy of IMEX scheme will blow up when $\tau\geq 0.1$.
However, for the sinc-type MBE model, the energy will not blow up (despite of the oscillation) even when $\tau =10$.
This is consistent with our analysis on uniform boundness (cf. Theorem \ref{thm4.1}).

\begin{figure}[!h]
\centering
\includegraphics[width=0.43\textwidth]{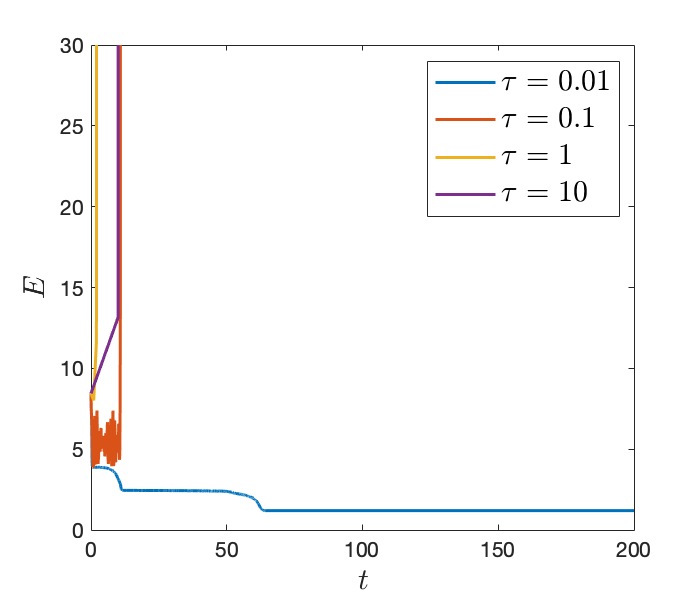}
\includegraphics[width=0.43\textwidth]{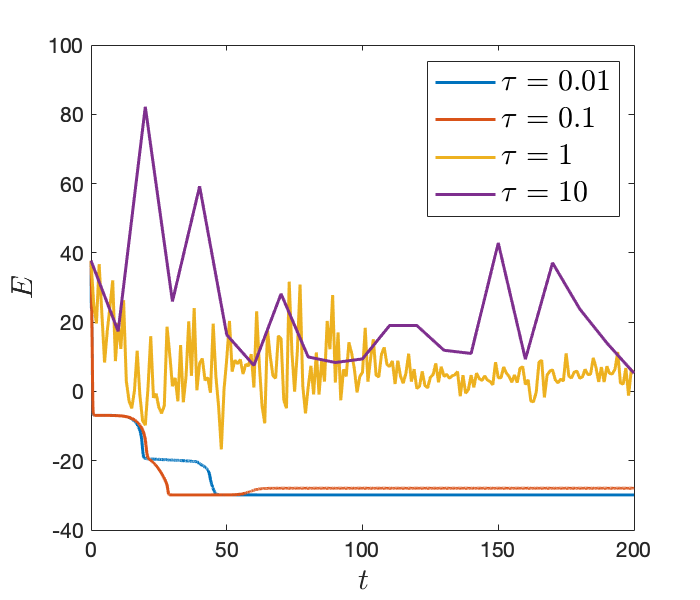}\\
\caption{\small Example \ref{exam2DMBEsin}: Energy blowup phenomenon of 2D classical MBE equation \eqref{eq:mbe_class} (left) and no energy blowup of the sinc-type MBE equation \eqref{eq:mbe_sinc} (right) for different $\tau$, computed by the first-order IMEX scheme with $N_x=N_y = 256$.  }\label{fig:blowup}
\end{figure}

Thirdly, in Table \ref{tab1}, we compare the threshold $\tau_{\mathrm c}$ of time step  preserving the energy dissipation, for the classical and sinc-type MBE models with different quantities of $\eta$ and different numerical schemes.
To be precise, we say that the energy dissipation is not preserved if there exists $0<t_n\leq T$ such that
\begin{equation}
E^{n}-E^{n-1}\geq {\texttt{Tol}} =  10^{-12},
\end{equation}
where ${\texttt{Tol}}$ is taken to be a mediocrely small positive number because we take account of machine error.
As a consequence, $\tau_{\mathrm c}$ is defined by
\begin{equation}
\tau_{\mathrm c} = \sup\left \{\tau>0\mid E^n<E^{n-1}+{\texttt{Tol}},~\forall~ 0<t_n\leq T\right\}.
\end{equation}
It can be observed in Table \ref{tab1} that the sinc-type MBE model seems  to always have larger threshold $\tau_{\mathrm c}$ than the classical MBE model.
According to Theorem \ref{thm3.1}, for the IMEX scheme of sinc-type MBE model, the following inequality shall hold
\begin{equation}
\tau_{\mathrm c} \ge 8 \eta^2,
\end{equation}
which is verified by Table \ref{tab1}.
Similarly, according to Theorem \ref{thm5.1}, for the BDF2 scheme of sinc-type MBE model, the following inequality shall hold
\begin{equation}
\tau_{\mathrm c} \ge \frac 89 \eta^2,
\end{equation}
which is also verified by Table \ref{tab1}.

\begin{table}
\centering
\caption{Threshold $\tau_{\mathrm c}$ of time step  preserving energy dissipation, for the classical MBE and sinc-type MBE where $T = 200$.}\label{tab1}
\renewcommand\arraystretch{1.25}
\begin{tabular}{lll|ll}
\hline
\cline{1-5}
& \multicolumn{2}{c|}{IMEX} & \multicolumn{2}{c}{BDF2}  \\
\cline{2-3}\cline{4-5}
$\eta^2$ & classical & sinc-type & classical & sinc-type\\
\hline
$0.1$    & $0.6<\tau_{\mathrm c}<0.7$   & $1<\tau_{\mathrm c}<2$  & $0.01<\tau_{\mathrm c}<0.02$  &   $0.06<\tau_{\mathrm c}<0.07$    \\
$0.01$   &  $0.01<\tau_{\mathrm c}<0.02$   & $0.09<\tau_{\mathrm c}<0.1$   & $0.003<\tau_{\mathrm c}<0.004$   &  $0.02<\tau_{\mathrm c}<0.03$      \\
$0.001$   & $0.001<\tau_{\mathrm c}<0.002$ & $0.008<\tau_{\mathrm c}<0.009$   &  $0.0003<\tau_{\mathrm c}<0.0004$ &    $0.001<\tau_{\mathrm c}<0.002$   \\
\hline
\cline{1-5}
\end{tabular}
\end{table}

\begin{example} \label{exam2DMBEsin_square}
{\em Consider the following 2D sinc-type MBE equation for the growth on square symmetry surfaces
\begin{align}\label{eq:mbe_sinc_square}
& \partial_t h = -\eta^2 \Delta^2 h   - (\sin(h_x) )_x - ( \sin(h_y))_y, \quad\mbox{on }\mathbb T^2=[-\pi,\pi]^2,
\end{align}
where $\eta = 0.1$ and the initial condition is a random state by assigning a random number varying from $-0.01$ to $0.01$ to each grid point.
}
\end{example}

The model \eqref{eq:mbe_sinc_square} is derived from replacing $(1-|h_x|^2)h_x$ and $(1-|h_y|^2)h_y$ by $\sin(h_x)$ and $\sin(h_y)$ respectively in the classical MBE equation for the growth on square symmetry surfaces (cf. \cite{xt2006}).
As a consequence, the energy functional of \eqref{eq:mbe_sinc_square} is defined by
\begin{equation}
\mathcal E = \int_{\Omega} \Bigl( \frac 12 \eta^2 |\Delta h|^2 +  \cos(h_x) + \cos(h_y) \Bigr) dx,
\end{equation}
%We consider
%the first order IMEX scheme: $\forall n\geq 0$,
%\begin{align}\label{eq:sch1_square}
%\frac {h^{n+1}-h^n}{\tau}
%= -\eta^2 \Delta^2 h^{n+1} - \nabla \cdot  ( \sin(h_x^n),\sin(h_y^n)),
%\end{align}
%and
%the BDF2 scheme: $\forall n\geq 1$,
%\begin{equation}\label{eq:sch2_square}
%\frac{3h^{n+1}-4h^n+h^{n-1}}{2\tau}=-\eta^2\Delta^2 h^{n+1}-2 \nabla \cdot (\sin(h_x^n),\sin(h_y^n))+
%\nabla \cdot  ( \sin(h_x^{n-1}),\sin(h_y^{n-1})).
%\end{equation}
%The sinc-type MBE equation \eqref{eq:mbe_sinc_square} is solved respectively by the first-order IMEX scheme \eqref{eq:sch1_square} and the BDF2 scheme \eqref{eq:sch2_square}, where $\eta = 0.1$ and the time step $\tau = 0.001$.
We solve the sinc-type MBE equation \eqref{eq:mbe_sinc_square} using the following BDF2 scheme: $\forall n\geq 1$,
\begin{equation}\label{eq:sch2_square}
\frac{3h^{n+1}-4h^n+h^{n-1}}{2\tau}=-\eta^2\Delta^2 h^{n+1}-2 \nabla \cdot (\sin(h_x^n),\sin(h_y^n))+
\nabla \cdot  ( \sin(h_x^{n-1}),\sin(h_y^{n-1})),
\end{equation}
where the time step $\tau = 0.01$.
In Figure \ref{fig:square2}, the isolines of the free energy
\begin{equation}\label{eq:freeF}
 F =\frac 12 \eta^2 |\Delta h|^2 +  \cos(h_x) + \cos(h_y)
\end{equation}
are plotted at different time.
The corresponding energy evolution is plotted in Figure \ref{fig:square2_energy}.
It can be observed that the energy dissipation property is preserved.

\begin{figure}[!h]
\centering
\includegraphics[width=0.46\textwidth]{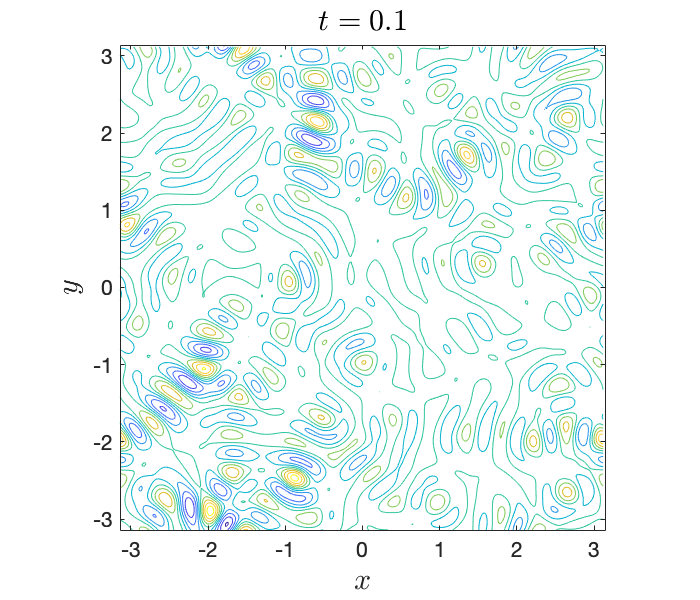}
\includegraphics[width=0.46\textwidth]{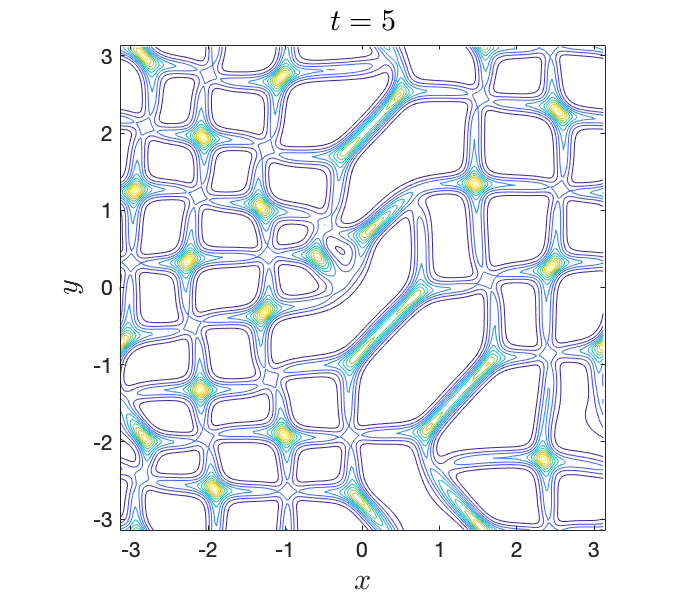}\\
\includegraphics[width=0.46\textwidth]{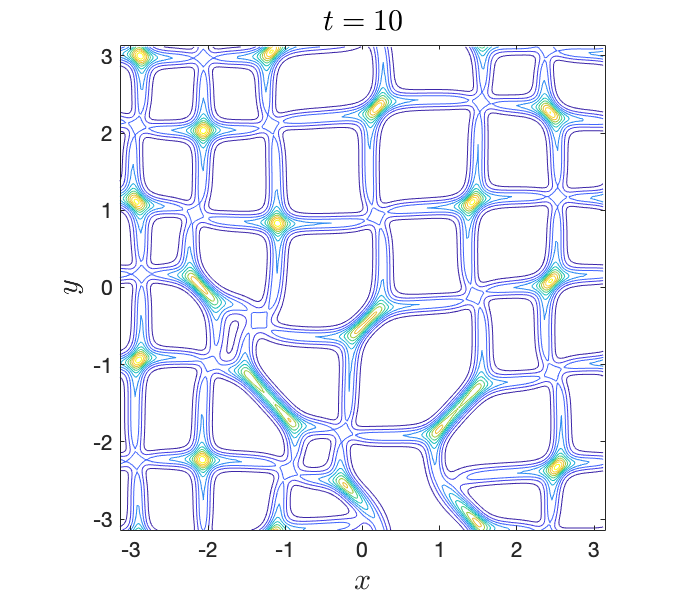}
\includegraphics[width=0.46\textwidth]{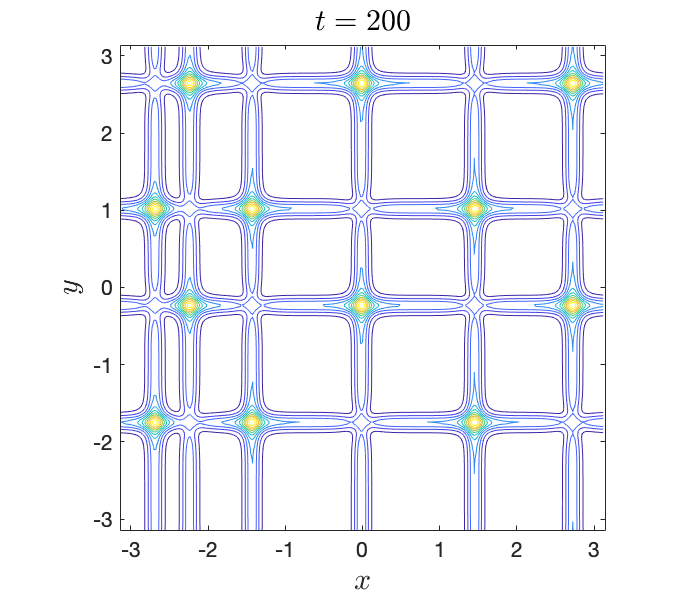}\\
\caption{\small Example \ref{exam2DMBEsin_square}: Isolines of the free energy $ F$ in \eqref{eq:freeF} at different time for the sinc-type MBE model \eqref{eq:mbe_sinc_square}, computed by the BDF2 scheme \eqref{eq:sch2_square} with $\tau = 0.01$, $N_x=N_y = 256$.  }\label{fig:square2}
\end{figure}

\begin{figure}[!h]
\centering
\includegraphics[width=0.43\textwidth]{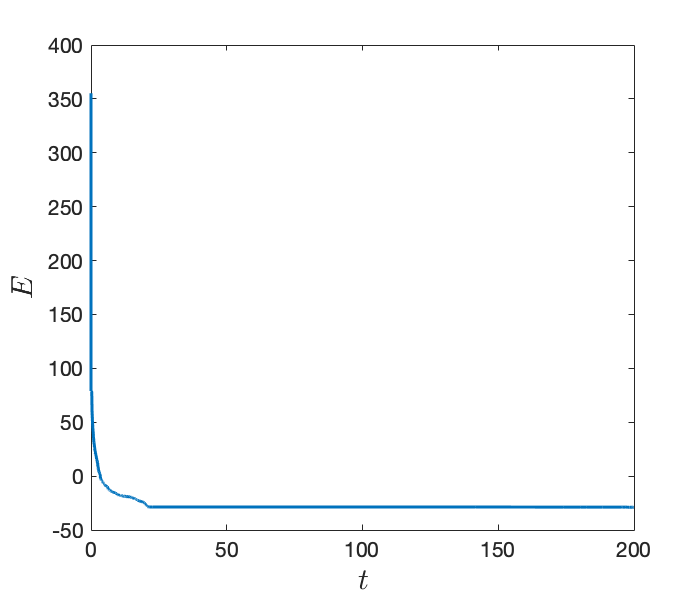}
\caption{\small Example \ref{exam2DMBEsin_square}: Energy evolution of the BDF2 scheme \eqref{eq:sch2_square} with $\tau = 0.01$, $N_x=N_y = 256$.  }\label{fig:square2_energy}
\end{figure}

\section{Concluding remarks}
In this work we introduced a new molecular beam epitaxy model driven by a cosine-type free
energy.  The energy landscape of the new free energy is similar to the classical double-well potential case but has the additional advantage that all its  derivatives are uniformly bounded.   We analyzed two types of numerical
discretization: one is the 1st order IMEX method, and the other is BDF2 in time with explicit treatment of
the nonlinear term. We characterized nearly optimal time step constraints for both the first order and the
second order schemes, and showed energy dissipation in both cases. We introduced two methods to show
that the the energy of the numerical solutions in both schemes are unconditionally uniformly bounded, i.e.
 the obtained upper bound is independent of the time step. To our best knowledge, this kind of results are the first in the literature. We also carried out several numerical
 experiments which show that this new model is far more superior than the existing double well type MBE models.
It is expected that our new theoretical framework
 can be generalized to many other phase-field models with good Lipschitz type nonlinearities. In addition it will
 be interesting to investigate the performance of other numerical schemes on the new model as well as the
 structural stability/instability of the metastable states and steady states.

{\bf Acknowledgement.}
%The research of T. Tang is partially supported by NSFC Grant 11731006, the NSFC/RGC Grant 11961160718.
The research of D. Li is supported in part by Hong Kong RGC grant GRF 16307317 and 16309518.
The research of W. Yang is supported by NSFC Grants 11801550 and 11871470.
The research of C. Quan is supported by NSFC Grant 11901281, the Guangdong Basic and Applied Basic Research Foundation (2020A1515010336), and the Stable Support Plan Program of Shenzhen Natural Science Fund (Program Contract No. 20200925160747003).

\frenchspacing
\bibliographystyle{plain}

%\bibliography{lqty}
%\bibliographystyle{siamplain}

\end{document}